\newcommand{\undd}[1]{\overline{\mathrm{d}}( #1 )}
\newcommand{\band}[1]{{\mathrm{d}^*}( #1 )}
\newcommand{\abs}[1]{\left\lvert #1 \right\rvert}
\newcommand\R{{\mathbf{R}}}
\newcommand\subeq{\mathrel{%
  \ooalign{\raise0.2ex\hbox{$\subset$}\cr\hidewidth\raise-0.8ex\hbox{\scalebox{0.9}{$\sim$}}\hidewidth\cr}}}
\newcommand\F{{\mathbf{F}}}
\newcommand\calM{{\mathcal{M}}}
\newcommand\ord{{\mathrm{ord}}}
\newcommand\Z{{\mathbf{Z}}}
\newcommand\N{{\mathbf{N}}}
\newcommand\Q{{\mathbf{Q}}}
\newcommand\supp{{\mathrm{supp}}}
\newcommand\T{\mathbf{T}}
\newtheorem{theorem}{Theorem}
\newtheorem{lemma}[theorem]{Lemma}
\newtheorem{proposition}[theorem]{Proposition}
\newtheorem{corollary}[theorem]{Corollary}
\newtheorem*{proposition*}{Proposition}
\def\le{\leqslant}
\def\ge{\geqslant}
\theoremstyle{plain}
\theoremstyle{remark}
  \newtheorem{remark}{Remark}
\title{On additive bases in infinite abelian semigroups}
\author{Pierre-Yves Bienvenu}
\address{P.-Y. Bienvenu, School of Mathematics, Trinity College Dublin, College Green, Dublin 2}
\email{bienvenp@tcd.ie}
\author{Benjamin Girard} 
\address{B. Girard, Sorbonne Universit\'e and Universit\'e de Paris, CNRS, IMJ-PRG, F-75006 Paris, France}
\email{\texttt{benjamin.girard@imj-prg.fr}}
\author{Th\'ai Ho\`ang L\^e}
\address{T. H. L\^e, Department of Mathematics, The University of Mississippi, University, MS 38677, United States}
\email{\texttt{leth@olemiss.edu}}
\subjclass[2020]{Primary 11B13; Secondary 11B30, 20K99, 20M14, 28C10}
\keywords{Additive combinatorics, infinite abelian group, translatable semigroup, Grothendieck group, additive basis, essential subset, invariant mean}
\begin{document}

\maketitle

\begin{abstract} 
Building on previous work by Lambert, Plagne and the third author, we study various aspects of the behavior of additive bases in infinite abelian groups and semigroups.
We show that, for every infinite abelian group $T$, the number of  essential subsets of any additive basis is finite, and also that the number of essential subsets of cardinality $k$ contained in an additive basis of order at most $h$ can be bounded in terms of $h$ and $k$ alone. 
These results extend the reach of two theorems, one due to Deschamps and Farhi and the other to Hegarty, bearing upon $\N$.
Also, using invariant means, we address a classical problem, initiated by Erd\H{o}s and Graham and then generalized by Nash and Nathanson both in the case of $\N$, of estimating the maximal order $X_T(h,k)$ that a basis of cocardinality $k$ contained in an additive basis of order at most $h$ can have.  
Among other results, we prove that $X_T(h,k)=O(h^{2k+1})$ for every integer $k \ge 1$. This result is new even in the case where $k=1$. Besides the maximal order $X_T(h,k)$, the typical order $S_T(h,k)$ is also studied.
Our methods actually apply to a wider class of infinite abelian semigroups, thus unifying
in a single axiomatic frame the theory of additive bases in $\N$ and in abelian groups.
\end{abstract}

\section{Introduction}
Let $(T,+)$ be an abelian semigroup. 
If $A,B$ are two subsets of $T$ whose symmetric difference is finite, we write $A \sim B$. 
Also if $A\setminus B$ is finite, we write $A\subeq B$.
Further, the Minkowski sum of $A$ and $B$ is defined as $\{a+b : (a,b)\in A\times B\}$ and denoted by $A+B$.
For every integer $h \ge 1$, the Minkowski sum of $h$ copies of $A$ is denoted by $hA$. By $[h]$, we mean $\{ 1, 2, \ldots, h\}$.

A subset $A$ of $T$ is called an \em additive basis \em of $T$, or just a \em basis \em of $T$ for brevity, whenever there exists an integer $h \ge 1$ for which all but finitely many elements of $T$ can be represented as the sum of exactly $h$ (not necessarily distinct) elements of $A$. 
In other words, $A$ is a basis of $T$ if and only if $hA \sim T$ for some $h \ge 1$. 
Further we define $k\cdot A$ to be the set $\{kx : x\in A\}\subset kA$.
Thus $(2\cdot\N)+3$ is not a basis of $\N$, but $(2\cdot\N)\cup\{3\}$ is.
The smallest possible integer $h \ge 1$ in the  definition above is then denoted by $\ord_T^*(A)$ and is called the \em order \em of $A$ over $T$. 
If $A$ is not a basis of $T$, then we set $\ord_T^*(A) = \infty$.
Note that what we call a basis is sometimes referred to as a "basis with an exact order" (and our order
as the exact order).

The study of additive bases already has a rich history, especially in the special case where $T$ is the semigroup $\N$ of nonnegative integers; it originated in additive number theory, motivated by Goldbach-type problems, and became a topic of research in its own right; the reader is referred to the surveys \cite{grekos,p}.
Some of the most natural and widely open problems in the area happen to deal with the "robustness" of this notion, an active area of research at least since Erd\H{o}s and Graham \cite{eg2,eg}: what happens
when one removes a finite subset from a basis? Does it remain a basis, and if so what happens to the order of the basis? 
Lambert, Plagne and the third author \cite{llp} initiated the systematic study of these questions  in 
general infinite abelian groups, 
when the removed subset is a singleton, and obtained partial results.
The present paper expands on these results, and generalizes them to arbitrary finite subsets.
Note that when $A$ is a basis of order $h$ of an infinite abelian group $G$ and $x\in A$, letting $B=A\setminus\{x\}$, the set $B'=B-x$ is a weak basis in the sense that $\bigcup_{i=0}^hiB'\sim G$.
This property, which was systematically used in  \cite{llp} to derive properties of $B$, vanishes when
one removes more than one element.

We now proceed to describe our results. 

\subsection{Essential subsets and the function $E_T(h,k)$}
Let $A$ be an additive basis of $T$.
A subset $F \subset A$ such that $A \setminus F$ is no longer an additive basis of $T$ is called an \em exceptional \em subset of $A$. Observe that any subset of $A$ containing an exceptional subset of $A$ is exceptional itself. 
This last observation motivates the following definition. 
An exceptional subset which is minimal with respect to inclusion will be called an \em essentiality \em of $A$.
A finite essentiality is called an \emph{essential subset}.
For instance $A=\{1,2,3\}\cup (6\cdot\N)$ is a basis of order 3 of $\N$, where $\{1,2,6\}$ is exceptional but not essential, the essential subsets are
$\{1,3\}$ and $\{1,2\}$, whereas $\{2\}$ is not exceptional (but its removal increases the order).

This notion was introduced by Deschamps and Farhi and, in the special case where $T=\N$, they showed that the number of  essential subsets in any given basis must be finite \cite[Th\'eor\`eme 10]{df}. 
Lambert, Plagne and the third author proved that this holds in any infinite abelian group 
for essential subsets of cardinality one (also called exceptional elements) \cite{llp}.
We generalize this latter result to arbitrary essential subsets, thus proving a Deschamps-Farhi theorem in infinite abelian groups.

\begin{theorem} \label{th:e1}
Every basis of an infinite abelian group $G$
has finitely many essential subsets.
\end{theorem}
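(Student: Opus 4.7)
The plan is to attach to each essential subset $F$ of a $G_T$-basis $A$ a finite-index subgroup $H_F \le G_T$, show the assignment $F \mapsto H_F$ is injective, and bound the number of such subgroups. The key tool is a reduction lemma I would prove first: for $B \subset G_T$ with $B \sim A$, $B$ is itself a $G_T$-basis of $T$ if and only if, for every finite-index subgroup $H \le G_T$, the image $\pi_H(B)$ is not contained in a proper coset of any subgroup of $G_T/H$ (equivalently, $\pi_H(B) - \pi_H(B)$ generates $G_T/H$). The easy direction uses $\pi_H(T) = G_T/H$, which holds because any subsemigroup of a finite group that spans the group as a group must equal it, so projecting $T \subeq h'B$ forces the generating condition on the difference set. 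The converse is where translatability enters crucially: via $T + x \sim T$ for every $x \in G_T$, elements of $A \setminus B$ can be absorbed into sums of $B$ after a suitable translation within $T$, provided the generating condition holds in every finite quotient.

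With the lemma in hand, for each essential $F$ set $H_F := \langle A \setminus F \rangle$. Then $A \setminus F \subset H_F$, and $H_F$ has finite index in $G_T$ by a standard argument: $h_0 A$ covers cofinite $T$ and projects to finitely many cosets of $H_F$ modulo the finite set $F$. The minimality of $F$ yields $F = A \setminus H_F$: if some $a_0 \in F \cap H_F$ existed, then $(A \setminus F) \cup \{a_0\} \subset H_F$ would project to $\{0\}$ modulo $H_F$, and by the lemma $A \setminus (F \setminus \{a_0\})$ would not be a basis, contradicting minimality. Hence $F \mapsto H_F$ is injective. For each $e \in F$, $A \setminus (F \setminus \{e\})$ is a basis by minimality, so the lemma applied to the projection $\{0, \pi_{H_F}(e)\}$ forces $G_T/H_F$ to be cyclic with $\pi_{H_F}(e)$ a generator. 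Writing $h_0 := \mathrm{ord}_T^*(A)$, the identity $h_0 \pi_{H_F}(A) = G_T/H_F$ (valid because $\pi_H(T) = G_T/H_F$ and $T \subeq h_0 A$) together with $\pi_{H_F}(A) = \{0, \pi_{H_F}(e)\}$ then yields $|G_T/H_F| \le h_0 + 1$.

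The main obstacle is the last step: showing that only finitely many subgroups $H \le G_T$ satisfy (a) $A \setminus H$ finite and (b) $G_T/H$ cyclic of order at most $h_0+1$. My intended approach is a contradiction argument. If infinitely many such $H_1, H_2, \ldots$ existed, the intersection $K_N := \bigcap_{i=1}^N H_i$ would still satisfy $A \setminus K_N = \bigcup_{i \le N}(A \setminus H_i)$ finite, and the bound from the preceding step applied to $K_N$ would give $[G_T : K_N] \le (|A \setminus K_N|+1)^{h_0}$, polynomial in $|A \setminus K_N|$. A super-polynomial lower bound on $[G_T : K_N]$ would then follow from the distinctness of the $H_i$ together with an independence analysis (the image of $G_T$ in $\prod_{i \le N} G_T/H_i$ grows as new $H_i$ are added). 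The delicate point, and the hardest part of the proof, is controlling the growth of $|A \setminus K_N|$: while each $\pi_{H_i}(A \setminus H_i)$ lies in the bounded set of $\phi(n_i) \le h_0$ generators of the cyclic group $\Z/n_i\Z$, the raw size $|A \setminus H_i|$ is not a priori bounded, and one needs the minimality of each essential plus a careful pigeonhole on the distinct cyclic quotients to reconcile the two bounds.
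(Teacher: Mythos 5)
Your overall strategy (attach to each essential subset a finite-index subgroup, prove injectivity of the assignment, then bound the number of admissible subgroups) is natural, and the first two stages can be repaired, but the third stage --- the only genuinely hard one --- is not closed, and the counting you sketch for it does not produce a contradiction. The difficulty is exactly the one you flag. If $H_1,H_2,\dots$ are the subgroups attached to infinitely many essential subsets $F_1,F_2,\dots$ and $K_N=\bigcap_{i\le N}H_i$, then (after arranging $F_{i+1}\not\subset\bigcup_{j\le i}F_j$, which makes the chain strictly decreasing) one does get $[G:K_N]\ge 2^N$ on one side and $[G:K_N]\le\binom{h+|A\setminus K_N|-1}{h-1}$ on the other; but $|A\setminus K_N|$ is not controlled in terms of $N$, since the cardinalities $|F_i|$ are unbounded. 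If $|F_i|$ grows like $2^i$, the upper bound is roughly $|A\setminus K_N|^{h-1}\approx 2^{N(h-1)}$, which dominates $2^N$ as soon as $h\ge 3$: no contradiction. This comparison does work when all essential subsets have a fixed cardinality $k$ (it is essentially how the paper proves the quantitative Theorem~\ref{th:e2}), but it cannot prove Theorem~\ref{th:e1}, where cardinalities are unbounded. The paper closes the gap with a genuinely additional idea: it introduces $d_i=\min\{\ell\ge 1:\ \ell A$ meets every coset of $H_i$ in an infinite set$\}$, shows $2\le d_i\le h$ with $(d_i)$ nondecreasing, and then, \emph{fixing} $i$ and letting $j\to\infty$, compares $[H_i:H_j]\to\infty$ with the number $\binom{|A\setminus K_i^*|+d_i-1}{d_i}$ of $(d_i-1)$-fold sums drawn from the \emph{fixed} finite reservoir $A\setminus K_i^*$; this forces $d_j>d_i$ for some $j\ge i$, hence $d_i\to\infty$, contradicting $d_i\le h$. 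Nothing equivalent to this appears in your sketch, so the proof is incomplete at its decisive step.

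Two smaller points also need repair. First, $H_F:=\langle A\setminus F\rangle$ is the wrong subgroup: essentiality is governed by $\langle A\setminus F-A\setminus F\rangle$ (the generalized Erd\H{o}s--Graham criterion, Lemma~\ref{lem:eg}), and $\langle A\setminus F\rangle$ can equal all of $G$ for an essential $F$ --- take $G=\Z$, $A=\{0\}\cup(1+2\Z)$, $F=\{0\}$ --- which breaks your minimality argument (there $a_0=0\in F\cap H_F$ yet $A$ is a basis). The fix is to use $H_F=\langle A\setminus F-A\setminus F\rangle$ together with the unique coset $K_F$ of $H_F$ meeting $A$ cofinitely, so that $F=A\setminus K_F$ and injectivity follows. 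Second, the bound $[G:H_F]\le h_0+1$ is false when $|F|>1$: for $A=5\Z\cup\{1,2\}\subset\Z$, a basis of order $3$, the set $F=\{1,2\}$ is essential and $[G:H_F]=5>4$. The images of the elements of $F$ are all generators of the cyclic quotient but need not coincide, so the correct bound is $[G:H_F]\le\binom{h_0+|F|-1}{h_0-1}$ (Lemma~\ref{lem:index}), which depends on $|F|$ --- and that dependence is precisely what makes the final step nontrivial.
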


Deschamps-Farhi's method is specific to $\N$, so we develop a new argument using the 
quotients of the group. To put the theorem above into perspective, we recall that, as proved by Lambert, Plagne and the third author, additive bases abound in infinite abelian groups, since every such group admits at least one additive basis of every possible order $h \ge 1$ (see \cite[Theorem 1]{llp}).

Going back to the special case where $T=\N$, Deschamps and Farhi observed that, for every integer $h \ge 2$, additive bases of order at most $h$ can have an arbitrarily large number of essential subsets. However, the situation changes drastically when we restrict our attention to the number of essential subsets of cardinality $k$ that a basis of order at most $h$ can have.
Indeed, for any infinite abelian semigroup $(T,+)$ and any integers $h,k\ge 1$, let us define
$$E_T(h,k) = \max_{\substack{A \subset T\\hA\sim T}} |\{ F \subset A: F \textup{ is essential and } |F| = k\}|,$$
and set $E_T(h) = E_T(h,1)$.
We also introduce the variant $E_T(h,\le k)$, defined identically except that the condition
$|F| = k$ is relaxed into $|F| \le k$, so $E_T(h,k)\le E_T(h,\le k)$.

The function $E_{\N}(h)$ was introduced and first studied by Grekos \cite{grekos0} who proved that $E_{\N}(h) \le h-1$, which was later refined in \cite{dg}. 
For their part, Deschamps and Farhi asked if the function $E_{\N}$ took only finite values \cite[Probl\`eme 1]{df}. This was later confirmed by Hegarty \cite[Theorem 2.2]{hegarty1}, who went on and obtained several asymptotic results such as 
\begin{equation} \label{eq:e1}
 E_{\N}(h,k) \sim (h-1) \frac{\log k}{\log \log k}
\end{equation}
for any fixed $h \ge 1$ as $k$ tends to infinity, and
\begin{equation} \label{eq:e2}
E_{\N}(h,k) \asymp_k \left( \frac{h^k}{\log h} \right)^{\frac{1}{k+1}}
\end{equation}
for any fixed $k \ge 1$ as $h$ tends to infinity \cite[Theorems 1.1 and 1.2]{hegarty2}. 
His results actually also hold for $E_\N(h,\le k)$.
However, it is still an open problem to know whether, for all $k \ge 1$, there exists a constant $c_{\N,k} >0$ such that 
$E_{\N}(h,k) \sim c_{\N,k}(h^k/\log h)^{1/(k+1)}$ 
as $h$ tends to infinity; so far it is only known for $k=1$ thanks to Plagne \cite{p3}.

In the framework of infinite abelian semigroups, far less is known concerning the function $E_T(h,k)$.
In \cite[Theorem 2]{llp}, Lambert, Plagne and the third author proved that $E_G(h) \le h-1$ for every infinite abelian group $G$ and every integer $h \ge 1$, and also that, as far as infinite abelian groups are concerned, this inequality is best possible for all $h \ge 1$. 
However, beyond this result and the fact that $E_G(1,k) = 0$ holds by definition, even the finiteness of $E_G(h,k)$ when $h,k \ge 2$ was left to be established.
We do so in this paper, even bounding $E_G(h,k)$ uniformly in $G$. 
We actually give three bounds, corresponding to the two asymptotic regimes where $h$ or $k$ is large, and the special case $h=2$.
\begin{theorem} \label{th:e2}
Let $G$ be an infinite abelian group.
\begin{enumerate}[(i)]
\item For any fixed $k\ge 1$, for any $h\ge 2$
the bound $E_G(h, k)\ll_k h^k$ holds.
\item For any fixed $h\ge 2$, 
for any $k\ge 2$ we have $E_G(h, k)\ll_h(k\log k)^{h-1}$.
\item For $h=2$, we have $E_G(2,k)\le 2k-1$.
\end{enumerate}
\end{theorem}
We will actually bound $E_G(h,\le k)$ which is at least as large as $E_G(h,k)$.
Our proof reveals the intimate link between the function $E_G$ and the set of finite quotients of $G$.
Thus our methods rely on the theory of finite abelian groups, including duality.
As our next theorem shows, there are no nontrivial universal lower bounds for $E_G$; thus, notwithstanding the just stated universal upper bounds, the function $E_G$ depends greatly on the structure of $G$, more precisely on its finite quotients.

\begin{theorem}\label{th:ElowBounds}
Let $G$ be an infinite abelian group.
\begin{enumerate}[(i)]
\item The function $E_G$ is trivial (i.e. $E_G(h,k) =0$ for all $h, k$) if, and only if, $G$
contains no proper finite index subgroups.
\item If $G$ admits $(\Z/2\Z)^d$ as quotient for any $d\ge 1$, then we have
$E_G(h,k)\ge (h-1)(2k-1)$ for any $h$ and infinitely many $k$.
\end{enumerate}
\end{theorem}
Therefore, $E_G=0$ whenever $G$ is a divisible group such as $\R$ or $\Q$ (i.e. for any $x\in G,n\in\N_{>0}$ there exists $y\in G$ such that $x=ny$), whereas  $E_G$ grows at least linearly in $h$ and in $k$ when $G$ is (the additive group of) 
$\Z^\N$ or $\F_2[t]$ for instance.
This contrasts sharply with $E_\N$ in view of the estimates \eqref{eq:e1} and \eqref{eq:e2}.
Also note that, when paired with Theorem \ref{th:e2} ($iii$), item $(ii)$ yields a good understanding of $E_G(2,k)$. 
Finally, the method we used to prove item $(ii)$ still applies when $(\Z/2\Z)^d$ is replaced by any finite abelian group (see Propositions \ref{prop:Equotients} and \ref{prop:decomposition}), even though the quality of the bound may no longer be close to optimal (see Proposition \ref{prop:numberMaxSubgroups}).

\subsection{Regular subsets and the function $X_T(h,k)$}
Let $T$ be an infinite abelian semigroup.
Let $A$ be an additive basis of $T$ such that 
$\ord_T^*(A) \le h$. 
What can be said about $\ord^*_T(A \setminus F)$ for those subsets $F \subset A$ such that $A \setminus F$ remains an additive basis of $T$? 
Such an $F \subset A$ is called a \em regular \em subset of $A$.

To tackle this problem, we define the function\footnote{In $\N$, this function is also denoted by $G_k(h)$ in the literature. 
Our notation accommodates the fact that we will be working with an infinite abelian group denoted by $G$, and also unifies different notations for the cases $k=1$ and $k > 1$.}
$$X_T(h,k) = \max_{\substack{A \subset T\\ hA \sim T}} \{ \ord_T^*(A \setminus F): F \subset A, F \textup{ is regular and } |F|=k\},$$
and set $X_T(h) = X_T(h,1)$.

In other words, $X_T(h,k)$ is the maximum order of a basis of $T$ obtained by removing a regular subset of cardinality $k$ from a basis of order at most $h$ of $T$. 

The function $X_{\N}(h)$ was  introduced by Erd\H{o}s and Graham in \cite{eg} under another name
and under this name by Grekos \cite{grekos0,grekos2}. It is known that
\begin{equation} \label{eq:x1}
 X_{\N}(h) \asymp h^2,
\end{equation}
see \cite{p2} for the best currently known bounds.
A conjecture of Erd\H{o}s and Graham \cite{eg2} asserting that $X_{\N}(h) \sim d_{\N}h^2$ for some absolute constant $d_{\N} >0$ as $h$ tends to infinity still stands to this day. 

The function $X_{\N}(h,k)$ was first introduced by Nathanson \cite{nathanson2}. For fixed $k \ge 1$ and $h \rightarrow \infty$, Nash and Nathanson \cite[Theorem 4]{nn} proved that 
\begin{equation} \label{eq:nn}
 X_{\N}(h,k) \asymp_{k}h^{k+1}.
\end{equation}
Their proof also yields $X_{\N}(h,k)\ll_h k^h$ for any fixed $h\ge 1$ and $k\rightarrow \infty$.
For a more detailed account and more precise estimates of $X_{\N}(h,k)$, we refer the reader to the survey \cite{jia}. 

In the context of infinite abelian groups, Lambert, Plagne and the third author \cite[Theorem 3]{llp} proved that, for a rather large class of infinite abelian groups $G$ (including $\Z^d$, any divisible group and the group $\Z_p$ of $p$-adic integers), one has 
\begin{equation} \label{eq:x2}
X_G(h) = O_G(h^2).
\end{equation}
However, the techniques do not carry over from these particular groups to arbitrary infinite abelian groups and, until now, it was not even known whether $X_G(h)$ is finite for all infinite abelian groups $G$ and integers $h \ge 1$. 
We now confirm that this is indeed the case and prove a Nash-Nathanson theorem in groups. 

\begin{theorem}\label{th:x2}
For any infinite abelian group $G$ and integer $k \ge 1$, we have
$X_G(h,k) \le \frac{h^{2k+1}}{k!^2} \left(1 + o_k(1)\right)$ as $h$ tends to infinity.
\end{theorem}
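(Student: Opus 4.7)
The approach will be via invariant means on the Grothendieck group, extending the technique behind Theorem~\ref{th:x1} to handle the multinomial combinatorics of removing $k$ elements.

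\textbf{Setup.} Let $G = G_T$ be the Grothendieck group of $T$; since $G$ is an infinite abelian group it is amenable, and translatability of $T$ allows us to pick an invariant mean $\mu$ on $G$ with $\mu(T) = 1$. Let $A \subset G$ be a $G_T$-basis of order at most $h$, let $F = \{a_1, \ldots, a_k\} \subset A$ be regular, set $A' = A \setminus F$, and write $h^* = \ord_T^*(A')$. Our aim is to bound $h^*$.

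\textbf{Step 1: Multinomial decomposition.} Writing $A = A' \cup F$ and expanding the sumset yields
$$hA \ = \ \bigcup_{\substack{\vec j \in \N^k \\ |\vec j|\,\le\,h}} \Bigl(\sum_{i=1}^k j_i a_i + (h - |\vec j|)\,A'\Bigr).$$
Applying $\mu$, using $\mu(hA) \ge \mu(T) = 1$, and invoking subadditivity together with translation invariance, one obtains
$$1 \ \le \ \sum_{m=0}^h \binom{h - m + k - 1}{k - 1}\,\mu(mA'). \qquad (\ast)$$

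\textbf{Step 2: Controlling $\mu(mA')$.} Since $A'$ is a $G_T$-basis of order $h^*$, the iterated sumset $mA'$ cannot be too large in the mean sense when $m \ll h^*$. The key estimate to prove is of the form
$$\mu(mA') \ \le \ \frac{\Psi_k(m)}{h^*},$$
for an explicit polynomial $\Psi_k$ of degree $k+1$. This inequality is of Plünnecke--Ruzsa type: we approximate $A'$ by its trace on a Følner sequence $(F_n)$ of $G$, apply the classical finite Plünnecke--Ruzsa inequality to the finite slice $A' \cap F_n'$ for a suitable enlargement $F_n'$, and pass to the limit $n \to \infty$ through $\mu$ to recover the bound.

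\textbf{Step 3: Combining.} Plugging the bound of Step 2 into $(\ast)$ gives
$$h^* \ \le \ \sum_{m=0}^h \binom{h - m + k - 1}{k - 1}\,\Psi_k(m).$$
The right-hand side is a convolution of binomial-type expressions, and by a Vandermonde-style identity evaluates asymptotically to $h^{2k+1}/(k!)^2\,(1 + o_k(1))$ as $h \to \infty$, yielding the desired bound on $h^*$.

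\textbf{Main obstacle.} The technical crux is Step 2. Standard Plünnecke--Ruzsa inequalities apply only to finite subsets of an abelian group, whereas $A'$ is infinite, so the bound must be obtained through Følner approximations in $G$ with careful tracking of error terms under the amenable limit. Obtaining both the right polynomial degree $k+1$ (which drives the final exponent $2k+1$) and the right leading coefficient (which produces the $(k!)^2$ denominator) is what makes this the delicate part of the proof; the weaker exponent $2k+1$ (as opposed to $k+1$ known in $\N$) reflects precisely what invariant means can extract in the absence of the ordered structure of $\N$.
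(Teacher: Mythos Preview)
Your Step~1 is essentially the paper's starting decomposition, but Step~2 is a genuine gap and cannot work as stated.

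The claimed inequality $\mu(mA') \le \Psi_k(m)/h^*$ is structurally incoherent: the left-hand side depends only on $A'$ and $m$, whereas the right-hand side depends on $k = |F|$, which is a feature of $F$, not of $A' = A \setminus F$. The same set $A'$ can arise from many pairs $(A,F)$ with different values of $k$, so no bound of this shape is possible. Even with the $k$-dependence removed, Pl\"unnecke--Ruzsa is the wrong tool: it controls $|mA|$ via $|A|$ and a doubling constant, not via the basis order of $A$, and there is no mechanism to pass such inequalities through an invariant mean (which need not come from any particular F{\o}lner sequence) to an upper bound on $\mu(mA')$ in terms of $h^*$. Note too that if a $k$-independent linear bound $\mu(mA') \le Cm/h^*$ held, your Step~3 would already yield $h^* = O_k(h^{k+1})$, the Nash--Nathanson exponent, which is \emph{not} known for general groups and would supersede the theorem you are trying to prove.

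The paper avoids upper-bounding $\mu(mA')$ altogether. Fixing any $b \in A'$ and using $iA' + (h-i)b \subset hA'$, every summand in the multinomial decomposition becomes a subset of a translate of the single set $hA'$; this gives $T \subeq \bigcup_{i=1}^m (x_i + hA')$ with $m \le \binom{h+k-1}{k}$, hence the \emph{lower} bound $\mu(hA') \ge 1/m$. A density-doubling iteration (either $d\big((2s{+}1)A'\big) \ge 2\,d(sA')$ or $s(A'-A')$ already fills $G$) then produces $s \le m(h+1)-1$ with $sA' - sA' = G$; writing each $x_i = a_i - b_i$ with $a_i,b_i \in sA'$ gives $T \subeq (h+ms)A'$, whence $h^* \le h + m^2(h+1) - m \sim h^{2k+1}/(k!)^2$. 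The exponent $2k+1$ thus comes from the factor $m^2$ in this last step, not from any polynomial growth rate of $\mu(mA')$.
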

This bound may well not be optimal.
In fact, if $A$ is a basis of order $h$ of $G$ and $B\subset A$ a basis of cocardinality $k$, we find that
$B-B$ is a basis of order $O(h^{k+1})$, which is optimal.
In the regime where $h$ is fixed and $k$ tends to infinity, we find that  
$X_G(h,k)\le \frac{hk^{2h}}{h!^2} \left(1 + o_h(1)\right)$ holds.

Nash-Nathanson's proof of \eqref{eq:nn} uses Kneser's theorem\footnote{This is not the same as, but related to, Kneser's well known theorem on the cardinality of the sumset of two finite sets in an abelian group (see for instance \cite[Theorem 5.5]{tv}).} on the lower asymptotic density of sumsets in $\N$. 
Now such a theorem is not available in every infinite abelian semigroup $T$.
Our main tool in proving Theorem \ref{th:x2} will be \textit{invariant means}, that is, translation-invariant nonnegative functionals of norm 1 on $l^\infty(T)$ (see Section \ref{sec:invariant} for a precise definition). When restricted to indicator functions on $T$, an invariant mean gives rise to a ``density'' which is some sort of probability measure, albeit being only finitely additive. This notion of density is similar in many ways to the asymptotic density, but it is defined abstractly and it is less straightforward to infer properties of a set from its density. 
In \cite[Theorem 7]{llp}, invariant means were already used, but their use in the study of $X_T$ is new. 
We believe that invariant means will become part of the standard toolbox to study additive problems in abelian semigroups.

Imposing specific conditions on the semigroup $T$ allows one to control the function $X_{T}(h,k)$ more finely. 
We found a class of abelian groups for which a bound of the shape \eqref{eq:nn} may be achieved.
We say that a group $G$ is $\sigma$-\textit{finite}
if there exists a nondecreasing sequence $(G_n)_{n\in\N}$
of subgroups such that $G=\bigcup_{n\ge 0}G_n$.
Examples include $(C[x],+)$ for any finite abelian group $C$ or
$\bigcup_{n\ge 1}U_{d_n}$ where $U_k$ is the group of $k$-th roots of unity and
$(d_n)_{n\ge 1}$ is a sequence of integers satisfying $d_n\mid d_{n+1}$ for any $n\ge 1$;
the latter example includes the so-called Pr\"ufer $p$-groups 
$U_{p^\infty}$.
Combining a result of Hamidoune and R{\o}dseth \cite{hr} on this class of groups with the argument of Nash and Nathanson, we will prove the following bound.
\begin{theorem}
\label{th:sigmaFinite}
Let $G$ be an infinite $\sigma$-finite abelian group.
Then $X_G(h,k)\le 2\frac{h^{k+1}}{k!}+O(h^k)$.
\end{theorem}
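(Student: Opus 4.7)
The plan is to transpose Nash and Nathanson's proof of \eqref{eq:nn} from $T = \N$ to a $\sigma$-finite abelian group $G = \bigcup_{n \ge 0} G_n$, substituting for Kneser's theorem on lower asymptotic density the sumset inequality of Hamidoune and R\o{}dseth \cite{hr} adapted to this setting. Fix a $G$-basis $A$ with $\ord^*_G(A) \le h$, a regular subset $F = \{a_1, \ldots, a_k\} \subseteq A$, and write $A' := A \setminus F$ and $h' := \ord^*_G(A')$. The first step is the multinomial decomposition
$$
hA = \bigcup_{\substack{(n_1, \ldots, n_k) \in \N^k \\ n_1 + \cdots + n_k \,\le\, h}} \Bigl( n_1 a_1 + \cdots + n_k a_k + (h - n_1 - \cdots - n_k) A' \Bigr),
$$
which realises $hA$ as a union of $\binom{h+k}{k}$ translates of sets of the form $r A'$ with $0 \le r \le h$.

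Next I equip $G$ with the (translation-invariant, finitely subadditive) lower density $d(B) := \liminf_{n \to \infty} |B \cap G_n|/|G_n|$. Since $G \subeq hA$ we have $d(hA) = 1$, so subadditivity applied to the decomposition above, together with pigeonhole, produces an integer $r^\ast \in \{0, 1, \ldots, h\}$ for which
$$
d(r^\ast A') \;\ge\; \frac{1}{\binom{h+k}{k}}.
$$

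The Hamidoune--R\o{}dseth theorem \cite{hr} on $\sigma$-finite abelian groups then enters as the key quantitative input: their Kneser-type analysis yields a Mann-type statement to the effect that any $B \subseteq G$ with $d(B) > 0$ is a $G$-basis whose order satisfies $\ord^*_G(B) \le 2/d(B) + O(1)$, the Kneser stabilizer being absorbed into the $O(1)$ correction. Applied to $B = r^\ast A'$ this gives $\ell := \ord^*_G(r^\ast A') \le 2\binom{h+k}{k} + O(1)$. Since a basis of order $\ell$ for $r^\ast A'$ furnishes a basis of order at most $r^\ast \ell$ for $A'$, we conclude
$$
h' \;\le\; r^\ast \ell \;\le\; 2 h \binom{h+k}{k} + O(h) \;=\; \frac{2 h^{k+1}}{k!} + O(h^k),
$$
which is the desired bound.

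The main obstacle is the Mann-type transfer in the third step: in $\N$ the passage from positive density to a bounded basis order is classical, but in a $\sigma$-finite group the Kneser stabilizer can be non-trivial and must be handled via the finer structure theorem of \cite{hr}. Once that is in place, the combinatorial heart of the argument -- the multinomial covering and the pigeonhole estimate on densities -- carries over essentially verbatim from the proof of Nash and Nathanson.
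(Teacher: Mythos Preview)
Your overall strategy matches the paper's, but there is a genuine gap in the pigeonhole step. You equip $G$ with the \emph{lower} density $d(B) = \liminf_{n\to\infty} |B\cap G_n|/|G_n|$ and claim it is finitely subadditive; it is not. From the pointwise inequality $|(X\cup Y)\cap G_n| \le |X\cap G_n| + |Y\cap G_n|$ one can only deduce $d(X\cup Y) \le \liminf_n \bigl(|X\cap G_n|/|G_n| + |Y\cap G_n|/|G_n|\bigr)$, and in general $\liminf(a_n+b_n) \ge \liminf a_n + \liminf b_n$, not $\le$. Concretely, one can partition $G$ into two sets each of lower density strictly less than $1/2$, so $d(X\cup Y) \le d(X)+d(Y)$ fails. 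Hence from $d(hA)=1$ and your decomposition into $\binom{h+k}{k}$ translates you cannot extract an $r^\ast$ with $d(r^\ast A') \ge \binom{h+k}{k}^{-1}$.

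The fix, which is exactly what the paper does, is to use the \emph{upper} density $\overline{d}(B)=\limsup_{n\to\infty}|B\cap G_n|/|G_n|$. Subadditivity then holds because $\limsup(a_n+b_n)\le \limsup a_n+\limsup b_n$, and the paper verifies this explicitly. The Hamidoune--R{\o}dseth theorem is quoted there in the form $\ord_G^*(C)\le 1+2/\overline{d}(C)$, so the rest of your outline goes through unchanged. Two further cosmetic differences: the paper replaces each $iB$ by a translate of $hB$ (via $iB\subset hB-(h-i)b$ for a fixed $b\in B$), obtaining $\binom{h+k-1}{k}$ translates of $hB$ and applying Hamidoune--R{\o}dseth directly to $hB$, rather than selecting some $r^\ast\le h$; and one should note that the summand with $r=0$ is the finite set $hF$, which has density $0$ and may be discarded. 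None of this affects the asymptotic $2h^{k+1}/k!+O(h^k)$.
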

In \cite[Theorem 5]{llp}, it was shown that for infinite abelian groups $G$ having a fixed exponent $p$, where $p$ is prime, $X_G(h)$ is in fact linear in $h$: $2h + O_p(1) \le X_G(h) \le ph + O_p(1)$. 
We now extend this to all infinite abelian groups having a prime power as an exponent, and show the same phenomenon for $X_G(h,k)$.
\begin{theorem} \label{th:x3}
Let $G$ be an infinite abelian group of finite exponent $\ell$. 
Then the following two statements hold.
\begin{enumerate}
\item $X_{G}(h,k) \le \ell^{2k} (h+1) - \ell^k +h $.
\item If $\ell$ is a prime power, then $X_G(h) \le \ell h + \ell^2 -\ell$.
\end{enumerate}
\end{theorem}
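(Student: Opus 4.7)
The plan is as follows. Let $A$ be an additive basis of $G$ with $\ord_G^*(A) \le h$, let $F \subseteq A$ be regular of size $k$, and let $A' = A \setminus F$. Set $H = \langle F \rangle$; since $G$ has exponent $\ell$, $H$ is finite with $|H| \le \ell^k$. A key preliminary observation used in both parts is that $(h+1) A = G$: since $A$ is infinite (otherwise $hA$ would be finite) and $G \setminus hA$ is finite, for any $x \in G$ one can pick $a \in A$ with $x - a \in hA$, hence $x \in (h+1)A$.

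For Part (1), I would decompose any $x \in G$ as $x = c + r$ where $c \in H$ collects the $F$-terms of some $(h+1)$-representation of $x$ in $A$ and $r$ is a sum of at most $h+1$ elements of $A'$. The problem then reduces to showing each $c \in H$ lies in $m_c A'$ with $m_c \le \ell^{2k}(h+1) - \ell^k$. To bound $m_c$, I would iterate the same decomposition: for each $c \in H$, write $c = \sigma(c) + \rho(c)$ with $\sigma(c) \in H$ and $\rho(c)$ a sum of $\le h+1$ elements of $A'$. Since $\sigma\colon H \to H$ and $H$ is finite, iterating gives the telescoping identity $c - \sigma^n(c) \in n(h+1) A'$, and the orbits of $\sigma$ are eventually periodic. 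Closing the cycles of $\sigma$ while matching residues modulo $\ell$ (exploiting that $\ell a = 0$ lets us pad any $A'$-representation by $\ell$ terms without changing its value) should give the uniform bound on $m_c$. The double exponent $\ell^{2k}$ arises because one must both traverse the orbit ($\le |H| \le \ell^k$ steps) and close up the cycle modulo $\ell$, which costs another factor of $|H|$.

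For Part (2), where $\ell = p^m$ is a prime power, the technical core is the lemma $f \in (h + \ell - 1) A'$ (with $F = \{f\}$). To prove it, I would exhibit an $a \in A'$ with $f + a \in h A'$; then, using $-a = (\ell - 1) a$ in the exponent-$\ell$ group $G$, one obtains $f = (f + a) + (\ell - 1) a \in h A' + (\ell - 1) A' = (h + \ell - 1) A'$. The prime power hypothesis enters here: by the structure theorem for abelian groups of bounded exponent, $G$ decomposes as a direct sum of cyclic $p$-groups, and this structure lets one locate a suitable $a$ by examining how $f$ sits within that decomposition. Granted the lemma, for any $x \in G$ I would write $x = jf + r$ from an $(h+1)$-representation, reduce $j$ modulo $\ell$ to $j' \in [0, \ell - 1]$ (leaving $jf$ unchanged), and substitute $f \in (h + \ell - 1) A'$ to obtain $x \in \bigl((\ell - 1)(h + \ell - 1) + (h + 1)\bigr) A'$; padding via $\ell a = 0$ to align residues yields the target $\ord_G^*(A') \le \ell h + \ell^2 - \ell$.

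The principal obstacles are twofold. In Part (1), the delicate point is the simultaneous handling of the pigeonhole on $H$ and the mod-$\ell$ residue matching required to produce a single bound $m_c$ valid for every $c \in H$: this is what forces the extra factor of $|H|$ and yields $\ell^{2k}$ rather than $\ell^k$. In Part (2), the heart of the argument is producing $a \in A'$ with $f + a \in h A'$; this is where the prime power structure of $\ell$ is essential, likely via the direct-sum decomposition of $G$ into cyclic $p$-groups in order to locate $a$ within a suitable direct summand complementary to a cyclic subgroup containing $f$.
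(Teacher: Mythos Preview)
Your proposal diverges substantially from the paper's approach in both parts, and in each case the outlined argument has a real gap.

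\textbf{Part (1).} Your iteration gives $c - \sigma^n(c) \in n(h+1)A'$, and eventual periodicity gives $\sigma^m(c) = \sigma^n(c)$ for some $m<n \le |H|$. But this only yields that $0$ lies in some $NA'$ (from the cycle), not that $c$ itself does. ``Closing the cycle'' would require expressing the fixed value $\sigma^m(c)\in H$ as a bounded $A'$-sum, which is precisely the problem you started with; the argument is circular. Padding by $\ell$ lets you adjust the \emph{length} of an $A'$-representation you already have, but it does not manufacture one. The paper avoids this entirely: it observes that since $G$ has exponent $\ell$, the set $\bigcup_{j=0}^{h-1} jF$ has size at most $\ell^k$, so $G$ is covered (up to a finite set) by at most $m=\ell^k$ translates of $hB$ where $B=A\setminus F$. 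Invariant means then give $d(hB)\ge 1/m$, and Lemma~\ref{lm:densityIncrement2} yields $sB-sB=G$ with $s\le m(h+1)-1$; plugging into Lemma~\ref{lm:NathNashLike} gives exactly $h + m^2(h+1) - m = \ell^{2k}(h+1)-\ell^k+h$. The factor $\ell^{2k}=m^2$ thus comes from the invariant-mean machinery, not from a double pigeonhole on $H$.

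\textbf{Part (2).} The heart of your argument is finding $a\in A'$ with $f+a\in hA'$. You know $f+a\in hA$ for all but finitely many $a\in A'$ (since $G\setminus hA$ is finite), but $hA\setminus hA'$ need not be finite, so there is no reason such an $a$ exists. Invoking the direct-sum decomposition into cyclic $p$-groups does not obviously help locate one, and you do not indicate how. The paper's mechanism is different and more delicate. After translating so that $a=0$ and $B=A\setminus\{0\}$, one uses $sB\subset (s+\ell)B$ to get $G=\bigcup_{i=h-\ell+2}^{h+1} iB$. The prime-power hypothesis then enters not via the structure theorem but via the purely arithmetic fact that for $\ell=p^m$, if $\gcd(a,\ell)=1$ and $\gcd(b,\ell)\neq 1$ then $\gcd(a-b,\ell)=1$; this is what forces the existence of $u,v$ with $h+2\le u<u+v\le h+\ell+1$, $uB\cap(u+v)B\neq\emptyset$, and $\gcd(v,\ell)=1$. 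From such a pair one builds an element $(\ell-1)c$ lying in $iB$ for an entire block of $\ell$ consecutive values of $i$, which combined with $G\sim\bigcup_{i=h-\ell+1}^{h} iB$ yields the bound $\ell h+\ell^2-\ell$.
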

Finally, we discuss lower bounds.
Again, they depend on the finite quotients of the group.
In contrast to  the function $E_G$, and unsurprisingly in view of Theorem \ref{th:x3},
it is large cyclic quotients rather than large  quotients having small exponent which
cause $X_G$ to be large.
\begin{theorem}
\label{th:XlowBounds}
Whenever $G$ admits arbitrarily large cyclic quotients,
we have for each fixed $k$ and infinitely many $h$ the bound $X_G(h,k)\gg_k h^{k+1}$
and for each fixed $h$ and infinitely many $k$ the
other bound $X_G(h,k)\gg_h k^h$.
\end{theorem} 
Beyond $\Z$ and groups which admit $\Z$ as quotients, this property is satisfied by $\Z_p$ for 
any prime $p$ and $G=\bigcup_{n\ge 1}\prod_{m\le n}\Z/m\Z$, the latter being $\sigma$-finite. Combining with Theorem \ref{th:sigmaFinite} and equation \eqref{eq:x2}, 
we therefore have $X_G(h,k)\asymp_k h^{k+1}$
for this latter group and $X_{\Z_p}(h)\asymp X_{\Z^d}(h)\asymp h^2$.

\subsection{Semigroups}
The results announced so far reproduce in the frame of infinite abelian groups some
results known in the semigroup $\N$ (at least qualitatively). 
It turns out that our proofs do not entirely use the group structure, and are naturally valid in a wider class of semigroups which
comprises $\N$ and infinite abelian groups, which we term \textit{translatable} and which we will now describe.
Therefore, another aspect of our work is to unify the treatment of additive bases in $\N$ and
in abelian groups.
But since our results are new and interesting mostly in the case of groups, we decided to defer
the introduction of translatable semigroups until now.

An abelian semigroup $T$ is \textit{cancellative} if whenever $a, b, c \in T$ satisfy $a+c = b+c$, the relation $a=b$ holds. It is well known that such a semigroup is naturally embedded in a group $G_T$ called its Grothendieck group that
satisfies $G_T=T-T$ (see Section \ref{sec:translatable}).

A \emph{translatable} semigroup is an infinite cancellative abelian semigroup $(T,+)$ such that for any $x\in T$, the set $T\setminus (x+T)$ is finite; in other words, $T\sim x+T$.
Every infinite abelian group is a translatable semigroup.
Other examples of translatable semigroups include $\N$, numerical semigroups (i.e. cofinite subsemigroups of $\N$) and also $C \times \N$ for any finite abelian group $C$. These examples, in a sense, classify all translatable semigroups (see Proposition \ref{prop:structure}). In contrast, neither $(\N^d,+)$ for $d\ge 2$ nor $(\N^*,\times)$ are translatable.

Note that such a semigroup has the convenient property that whenever $A\subset T$ is a basis
and $x\in T$, then $x+A$ is still a basis, of the same order.
This property actually characterizes translatable semigroups, since $A=T$ is  a basis of order 1,
and bases of order 1 are precisely the cofinite subsets of $T$.

Having introduced translatable semigroups, we can now state our results in a more general setting.

\begin{proposition*}
Theorems \ref{th:e1},\ref{th:e2} and \ref{th:x2} are also valid when the group $G$ is replaced by a translatable semigroup $T$.
\end{proposition*}

The following identity says that the functions $E$ defined over $T$ and over its Grothendieck group are the same.
\begin{theorem}
\label{th:EGT}
Let $T$ be a translatable semigroup and $G$  its Grothendieck group.
Then for any $h,k\geq 1$ we have
$E_T(h,k)=E_G(h,k).$
\end{theorem}
In particular $E_\N=E_\Z$, which is already new. It also follows from Theorems \ref{th:EGT} and \ref{th:e2}$(iii)$ that $E_T(2,k) \le 2k-1$ for all translatable semigroups $T$. It is an interesting problem to determine if we also have $X_T(h,k)=X_G(h,k)$.

To conclude this subsection, observe that for general infinite abelian semigroups, even
cancellative ones, the finiteness of the set of essential subsets
(Theorem \ref{th:e1}) fails dramatically. Indeed, let $T=(\N^*,\times)$.
Let $A=\{2^k : k\in \N\}\cup\{2j+1 : j \in \N\}$.
Then the decomposition of any positive integer as a product of a power of 2 and an odd integer shows that $A$ is a basis of order 2. 
However, every prime is essential. 
Indeed, $h(A\setminus \{2\})$ does not meet $\{n \in \N : n\equiv 2\mod 4\}$ for any $h\ge 1$. 
If $p$ is an odd prime, the set $h(A\setminus \{p\})$ does not meet $\{2^kp: k\in \N\}$.
Hence, if one wants to keep this finiteness result, one needs to specify appropriate axioms. We stress that translatability is a joint generalization of $\N$ and infinite abelian groups. However, it may well be the case that the finiteness of $E_T(h,k)$ and $X_T(h,k)$ holds in an even more general class of semigroups. Indeed, one can show that $E_{\N^d}(h,k)$ is finite for any $h,k$ and $d$, though $(\N^d, +)$ is not translatable when $d \ge 2$. This is not obvious; in fact, even the fact that $\ord^*_{\N^d}$ is well-defined (i.e. if $h A \sim \N^d$ then $(h+1) A \sim \N^d$) is not obvious. We plan to investigate this finiteness phenomenon beyond translatable semigroups in the future.

\subsection{The "typical order" and the function $S_T(h,k)$}
Define $S_T(h)$ to be the minimum $s$ such that for any basis $A$ with $\ord_T^*(A) \le h$, there are only finitely many elements $a \in A$ such that $\ord_T^* (A \setminus \{a\} ) > s.$
In particular $S_T(h)\le X_T(h)$.
Grekos \cite{grekos2} introduced the function $S=S_\N$ and conjectured that $S_\N(h) < X_\N (h)$. 
In \cite{cp}, Cassaigne and Plagne settled Grekos' conjecture and proved that
\begin{equation*}
h+1 \le S_\N(h) \le 2h.
\end{equation*}

In \cite[Theorem 7]{llp}, using invariant means, it is shown that we also have $h+1 \le S_G(h) \le 2h$ for every infinite abelian group $G$. 
It is an open problem to find the exact asymptotic of $S_{\N}(h)$, or $S_{T}(h)$ for \em any \em fixed translatable semigroup $T$.
The proof of \cite[Theorem 7]{llp} also gives a bound for the number of "bad" elements, that is, elements $a$ of a basis $A$ of order at most $h$ such that
$S_G(h)<\ord_G^*(A\setminus \{a\})$. Further, it implies that the number of such elements is at most $h^2$. 
We now give a slight generalization of this fact to translatable semigroups, while showing that in the case of groups we do have a sharper bound.
\begin{theorem} \label{th:s1}
Let $T$ be a translatable semigroup, and let $h \ge 2$ be an integer. Then $S_{T}(h)\le 2h$.
In fact, if $A$ is a basis of $T$ of order at most $h$, then there are at most $h(h-1)$ elements $a$ of $A$ such that $\ord_T^* (A \setminus \{a\}) >2h $. 
If $T$ is a group then the number of such elements is at most $2(h-1)$.
\end{theorem}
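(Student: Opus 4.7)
The plan is to extend the invariant-mean technique used in \cite[Theorem~7]{llp}. Fix an invariant mean $\mu$ on $G=G_T$ with $\mu(T)=1$, obtained as a weak-$*$ limit of averages along a F\o lner sequence contained in $T$; in the group case we additionally arrange $\mu$ to be symmetric ($\mu(-S)=\mu(S)$ for every $S\subseteq G$). For each $a\in A$ decompose
\[
  hA \;=\; h(A\setminus\{a\}) \;\sqcup\; W_a,
\]
where $W_a$ consists of those $t\in hA$ whose every representation as a sum of $h$ elements of $A$ uses $a$; note $W_a\subseteq a+(h-1)A$. Since $T\subeq hA$ gives $\mu(hA)=1$, we have $\mu(W_a)=1-\mu(h(A\setminus\{a\}))$. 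Letting $R(t)=\{a\in A:t\in W_a\}$ for $t\in hA$, one has $|R(t)|\le h$, and swapping the sum with the integral yields the total-mass bound
\[
  \sum_{a\in A}\mu(W_a) \;=\; \int_{hA} |R(t)|\,d\mu \;\le\; h.
\]

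For $a\in B$, setting $A'=A\setminus\{a\}$, the hypothesis $\ord_T^*(A')>2h$ forces $2hA'=hA'+hA'\ne G$. In the group case, the classical amenability lemma -- that in an abelian group with symmetric invariant mean, $\mu(C)+\mu(D)>1$ implies $C+D=G$ -- applied with $C=D=hA'$ gives $\mu(hA')\le 1/2$, and hence $\mu(W_a)\ge 1/2$. Combined with the total-mass bound this yields $|B|\le 2h$; tightening the total-mass bound to $\sum_a\mu(W_a)\le h-1$ (by verifying that $|R(t)|=h$ can occur only on a $\mu$-null set of $t$, since such $t$ must admit a unique $h$-representation with $h$ pairwise distinct summands) then delivers the claimed $|B|\le 2(h-1)$. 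In the semigroup case, the absence of a symmetric invariant mean with $\mu(T)=1$ when $T\subsetneq G$ forces a one-sided substitute for the sumset lemma, which only yields $\mu(W_a)\ge 1/h$; combined with the refined total-mass bound this gives $|B|\le h(h-1)$.

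The principal obstacle is producing the lower bound on $\mu(W_a)$ in the semigroup case: symmetry of $\mu$ is incompatible with $\mu(T)=1$ whenever $T$ is a proper subsemigroup of $G$, so the one-sided replacement for the sumset inequality inevitably loses a factor of order $h/2$ compared with the group case, which is responsible for the gap between the two bounds. The tightening of the total-mass inequality from $h$ to $h-1$ also requires a separate argument, identifying the set of $t\in hA$ admitting a unique $h$-representation with pairwise distinct summands as $\mu$-negligible; this should follow from the F\o lner structure underlying $\mu$ and from the fact that any infinite $A$ that serves as a $G_T$-basis of order $h$ has enough ``additive moves'' producing alternative representations for generic $t$.
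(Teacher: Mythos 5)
Your overall strategy is the same as the paper's: decompose $hA$ into $h(A\setminus\{a\})$ and a deficiency set $W_a$, bound $\sum_a \mu(W_a)$ by $h-1$ via double counting, and bound each $\mu(W_a)$ from below by $1/2$ in the group case (symmetric mean plus the prehistoric lemma, which is exactly the paper's Claim 3) and by $1/h$ in the general case. However, two steps have genuine gaps. The first concerns the refinement of the total-mass bound from $h$ to $h-1$. You propose to show that $\{t: |R(t)|=h\}$ is $\mu$-null on the grounds that such $t$ have a unique $h$-representation with pairwise distinct summands, and that this uniqueness set should be negligible "by the F\o lner structure". That is false in general: take $G=\bigoplus_{n\ge 0}\F_2$, write $G=V_1\oplus V_2$ with $V_1,V_2$ subgroups of infinite index, and let $A=V_1\cup V_2$, a basis of order $2$. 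Every $t\notin V_1\cup V_2$ --- a set of full density, since infinite-index subgroups are null for any invariant mean --- has a unique representation $t=v_1+v_2$ with two distinct summands, so $|R(t)|=h$ almost everywhere. The correct fix (and the paper's argument) is to work with an arbitrary \emph{finite} subset $I$ of the bad set and observe that $\{t\in hA: |R(t)\cap I|=h\}\subset hI$, which is finite and hence null; this gives $\sum_{a\in I}\mu(W_a)\le h-1$ and therefore $|I|\le 2(h-1)$ for every finite $I$. Restricting to finite $I$ is also needed to legitimize your interchange of sum and integral: $\Lambda$ is only finitely additive, so $\sum_{a\in A}\mu(W_a)=\int|R(t)|\,d\mu$ fails for the infinite sum (in the example above the left-hand side is $0$ while the right-hand side is $2$); only the inequality $\sum_{a\in I}\mu(W_a)\le\int|R(t)\cap I|\,d\mu$ for finite $I$ is available, and it suffices.

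The second gap is the semigroup case, which rests entirely on the unproved assertion that $\mu(W_a)<1/h$ forces $\ord_T^*(A\setminus\{a\})\le 2h$; you only name this as "a one-sided substitute for the sumset lemma" without supplying it. This is the real content of that case (the paper's Lemma \ref{lem:lowdensity}): writing $B=T\setminus h(A\setminus\{a\})$ and fixing $a_0\in A\setminus\{a\}$, one uses $\sum_{i=0}^{h-1}d\bigl(B+(h-i)a+ia_0\bigr)<1$ to find infinitely many $x$ with $x+ia+(h-i)a_0\in h(A\setminus\{a\})$ for all $0\le i\le h-1$, and then splices such an $x$ with a representation of $t-x$ in $h(A-a_0)$ (in which $a-a_0$ occurs some $i\le h-1$ times) to place $t$ in $2h(A\setminus\{a\}-a_0)$. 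Without this argument the lower bound $\mu(W_a)\ge 1/h$, and hence the bound $h(h-1)$, is unsupported. With these two repairs your proof would coincide with the paper's.
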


While we do not know if $2(h-1)$ is best possible, it is nearly so because certainly $E_G(h)$ is a lower bound for the maximal number of bad elements, and it was observed in \cite[Theorem 2]{llp} that for the group $G=\F_2[t]$, one has $E_G(h) = h-1$ for any $h \ge 1$.

As a generalization, define $S_T(h,k)$ to be the minimum value of $s$ such that for any basis $A$ with $\ord_T^*(A) \le h$, there are only finitely many regular subsets $F \subset A, |F|=k$ with the property that $s < \ord_T^* (A \setminus F )$.
Thus $S_T(h,1) =S_T(h)$. We have the trivial bound $S_T(h,k) \le X_T(h,k)$, and it is interesting to know if this inequality is strict. 
We have a partial positive answer.
\begin{theorem} \label{th:s2}
Let $T$ be a translatable semigroup, and let $h \ge 1$ be an integer. 
Then 
\begin{equation} \label{eq:st}
S_T(h,2)\le 2X_T(h).
\end{equation}
Furthermore, if $A$ is a basis of $T$ of order at most $h$, there are at most $O \left( h^2 X_T(h)^2 \right)$ regular pairs $F \subset A$ such that $\ord_T^* (A \setminus F) > 2X_T(h)$. 
If $T$ is a group, then the number of such pairs is at most $4h (X_T(h)-1)$.
\end{theorem}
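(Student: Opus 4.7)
The plan is to apply Theorem \ref{th:s1} at two levels, first to each one-element deletion $A \setminus \{a\}$ and then to $A$ itself, and to combine the resulting counts. Set $H := X_T(h)$. For any regular pair $\{a, b\} \subseteq A$, both $\{a\}$ and $\{b\}$ are automatically regular in $A$, because $A \setminus \{a, b\} \subseteq A \setminus \{a\}$ is a basis; hence $h_a := \ord_T^*(A \setminus \{a\})$ and $h_b := \ord_T^*(A \setminus \{b\})$ are both at most $H$ by the very definition of $X_T(h)$.

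For each regular $a \in A$, I would apply Theorem \ref{th:s1} to the basis $A \setminus \{a\}$ of order $h_a \le H$. This yields a finite set $B_a \subseteq A \setminus \{a\}$ of size at most $h_a(h_a - 1) \le H(H-1)$ in general, or at most $2(h_a - 1) \le 2(H-1)$ when $T$ is a group, with the property that $\ord_T^*(A \setminus \{a, b\}) \le 2 h_a \le 2 H$ for every $b \in (A \setminus \{a\}) \setminus B_a$. In particular, for each fixed $a$, any ``bad'' partner $b$ (meaning $\ord_T^*(A \setminus \{a, b\}) > 2H$) must lie in $B_a$. A second application of Theorem \ref{th:s1}, this time to $A$ itself, produces $B_0 \subseteq A$ with $|B_0| \le h(h-1)$ (or $\le 2(h-1)$ in the group case) outside of which $h_a \le 2h$. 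If I can show that every bad pair $\{a, b\}$ has $a \in B_0$ or $b \in B_0$, the total count is bounded by $2|B_0| \cdot \max_a |B_a| \le 2h(h-1)H(H-1) = O(h^2 X_T(h)^2)$; in the group case the tighter bound $4(h-1)(H-1) \le 4h(X_T(h)-1)$ matches the stated quantity. The qualitative bound $S_T(h,2) \le 2H$ then follows automatically, since the bad-pair set becomes finite.

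The main obstacle is exactly the implication ``$a, b \notin B_0 \Rightarrow \{a, b\}$ is not bad''. A direct sub-additive attempt of the form $\ord_T^*(A \setminus \{a, b\}) \le h_a + h_b$ does not go through, because the sumset $h_a(A \setminus \{a\}) + h_b(A \setminus \{b\})$ is not in general contained in $(h_a + h_b)(A \setminus \{a, b\})$: the first summand can use the element $b$, and the second can use $a$. I would therefore adapt the invariant-mean machinery underlying Theorem \ref{th:s1}: for each bad pair the witness set $V_{\{a, b\}} := T \setminus 2H(A \setminus \{a, b\})$ is infinite, and the key step would be a lower bound $\mu(V_{\{a, b\}}) \ge c/h$ for a suitable invariant mean $\mu$ under the hypothesis $a, b \notin B_0$ (exploiting $h_a, h_b \le 2h$). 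A double-counting of the shape $\sum_{\{a, b\} \text{ bad}} \mu(V_{\{a, b\}}) \le 2H$ would then force the number of bad pairs with both endpoints outside $B_0$ to be $O(hH)$, which is absorbed into the $O(h^2 H^2)$ contribution from pairs meeting $B_0$.
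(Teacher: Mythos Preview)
Your setup via the sets $B_a$ is exactly right, and your instinct that an invariant-mean double count is the way forward is correct. But as written the proposal has a real gap, and the sketch you offer to fill it does not work as stated.

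First, the direct count $2|B_0|\cdot\max_a|B_a|$ only covers bad pairs meeting $B_0$; you acknowledge you cannot show that every bad pair meets $B_0$, and indeed nothing in the argument forces this. So even in the group case your bound $4(h-1)(H-1)$ is a count of the wrong set.

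Second, in your repair the upper bound $\sum_{F\ \text{bad}}\mu(V_F)\le 2H$ is unjustified. With your choice $V_F=T\setminus 2H(A\setminus F)$, a $2H$-term representation of a generic $x$ only shows that at most $2H\cdot\max_a|B_a|$ bad $F$ can have $x\in V_F$, yielding $\sum_F\mu(V_F)\le 2H\cdot H(H-1)$, not $2H$; combined with a lower bound of order $1/h$ per term this gives $O(hH^3)$, which is worse than claimed.

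The paper fixes both issues at once by choosing the witness set $T\setminus h(A\setminus F)$ (with $h$, not $2H$). For a generic $x\in T$, fix a single representation $x=a_1+\cdots+a_h$ in $A$; if $x\notin h(A\setminus F)$ then $F$ must contain some $a_i$, and then the \emph{other} element of $F$, being a bad partner of $a_i$ in the basis $A\setminus\{a_i\}$ of order at most $H$, lies in your $B_{a_i}$. This is precisely where your sets $B_a$ enter, and it yields the clean pointwise bound $h\cdot\max_a|B_a|$, hence
\[
\sum_{F\in R} d\bigl(T\setminus h(A\setminus F)\bigr)\ \le\ h\cdot H(H-1)\quad\text{(or }2h(H-1)\text{ when $T$ is a group).}
\]
In the group case one has $d(T\setminus h(A\setminus F))\ge 1/2$ for every $F\in R$ (otherwise $2h(A\setminus F)=T$), and the bound $|R|\le 4h(H-1)$ follows immediately, with no reference to $B_0$ at all. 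For general $T$, your set $B_0$ does enter, but only at the lower-bound step: if $F\in R$ has an element $a\notin B_0$, then $\ord_T^*(A\setminus\{a\})\le 2h$, and Lemma~\ref{lem:lowdensity} applied to the basis $A\setminus\{a\}$ forces $d(T\setminus h(A\setminus F))\ge d(T\setminus 2h(A\setminus F))\ge 1/(2h)$. The displayed sum then bounds the number of such pairs by $2h\cdot hH(H-1)=O(h^2H^2)$, while pairs with both elements in $B_0$ number at most $\binom{|B_0|}{2}=O(h^4)$.
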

We underline that already in the semigroups $T=\N$ or $\Z$, the bound \eqref{eq:st} is nontrivial because $X_T(h)$ is much smaller than
$X_T(h,2)$.
Indeed,  $X_T(h,2)\gg h^3$ by \eqref{eq:nn} and Theorem \ref{th:XlowBounds} while $X_T(h)=O(h^2)$ by
\eqref{eq:x1} and \eqref{eq:x2}.
Thus $S_T(h,2)=O(X_T(h,2)/h)=o(X_T(h,2))$ as $h$ tends to infinity.

The organization of the paper is as follows. 
In Section \ref{sec:prelim} we introduce some tools used in our proofs, including a generalization of the Erd\H{o}s-Graham criterion for finite exceptional subsets. 
In Sections \ref{sec:e}, \ref{sec:x} and \ref{sec:s}, we prove results on the functions $E_T$, $X_T$, and $S_T$ respectively. 

\section{Preliminary results} \label{sec:prelim}
\subsection{Translatable semigroups and their Grothendieck groups} \label{sec:translatable}
Let $T$ be a cancellative abelian semigroup. 
We let $G_T$ be the quotient of the product semigroup $T\times T$ (with coordinate-wise addition) by the equivalence relation $\mathcal{R}$ defined by $(a_1,a_2)\mathcal{R} (b_1,b_2)$ if $a_1+b_2=a_2+b_1$. It is clear that the equivalence relation is compatible with the addition, so that the quotient is again an abelian semigroup. 
Further  the class of $(x,x)$ is a neutral element which we denote by 0 and $(a_1,a_2)+(a_2,a_1)=0$, so that $G_T$ is an abelian group.
Also $T$ is embedded in $G_T$ via the map $x\mapsto (x+t,t)$ (for any $t\in T$).
This group is called the \emph{Grothendieck group} of $T$.

By identifying $x \in T$ with $(x,0) \in G_T$, we have $T\subset G_T$, and we observe that $G_T=T-T$. 
We will often omit the index and let $G=G_T$.

Recall that a translatable semigroup is an infinite cancellative abelian semigroup with the property that for any $x\in T$, the set $T\setminus (x+T)$ is finite, or equivalently $T \sim x+T$.
We now list some immediate consequences of this property that we will use frequently.
\begin{lemma}
\label{lem:translatable}
Let $T$ be a translatable semigroup, $G=G_T$, and $H$ be a subgroup of finite index of $G$. Then
\begin{enumerate}
\item For any $x \in G$, we have $T \sim x+T$.
\item If $A$ is a subset of $G$, then for any $x\in G$, we have 
$T\cap (x+A) \sim x + T\cap A$.
\item If $F$ is a finite subset of $G$, then there is $t \in T$ such that $t+F \subset T$.
\item For any $x\in G$, $T\cap (x+H)$ is infinite.
\item $T\cap H$ is also a translatable semigroup. Furthermore, $H = T\cap H - T\cap H$.
\item If $R$ contains a system of representatives of $G/H$ and $S\subset G$ satisfies $T\cap H \subeq S$, then $T \subeq R+S$.
\end{enumerate}
\end{lemma}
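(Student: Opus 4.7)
The overall strategy is to first establish statement (1) as an extension of the defining translatability property from $x \in T$ to arbitrary $x \in G$, then to derive the remaining items from (1) with minimal additional work. For (1), write $x = t_1 - t_2$ with $t_1, t_2 \in T$ (possible since $G = T - T$). Translation by $x$ is a bijection of $G$, so it preserves the relation $\sim$; applying it to the identity $t_2 + T \sim T$ yields $t_1 + T = x + (t_2 + T) \sim x + T$, and composing with the translatability identity $t_1 + T \sim T$ gives $T \sim x + T$. Statement (2) then follows immediately upon intersecting both sides of $T \sim x + T$ with $x + A$, since $(x + T) \cap (x + A) = x + (T \cap A)$ and symmetric differences only shrink under intersection.

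For (3), note that $\{t \in T : t + f \notin T\} = T \setminus (T - f)$ is finite by (1) applied to $-f$; since $F$ is finite, only finitely many $t \in T$ can fail the condition $t + F \subset T$, so a valid $t$ exists by the infinitude of $T$. For (4), apply (2) with $A = H$ to obtain $T \cap (x + H) \sim x + (T \cap H)$ for all $x \in G$, which reduces the question to showing $T \cap H$ is infinite; but $T = \bigsqcup_i T \cap (g_i + H)$ over the finitely many cosets of $H$, so some $T \cap (g_i + H)$ is infinite, and by the equivalence just noted so too is $T \cap H$.

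For (5), closure of $T \cap H$ under addition and cancellativity are automatic, and infinitude comes from (4). Translatability inside $T \cap H$ is direct: for $x \in T \cap H$ one has $x + (T \cap H) = (x + T) \cap H$, so $(T \cap H) \setminus (x + (T \cap H)) \subset T \setminus (x + T)$ is finite. The delicate point is the Grothendieck identity $H = (T \cap H) - (T \cap H)$: given $h \in H$, the set $S := T \cap (-h + T)$ is cofinite in $T$ by (1), hence $S \cap H$ is cofinite, and in particular nonempty, in the infinite set $T \cap H$; any $t$ in this intersection gives $s := h + t \in T$, and since $h, t \in H$ also $s \in H$, so $h = s - t$ exhibits $h$ as a difference in $T \cap H$. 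Finally, for (6), partition $T = \bigsqcup_i T \cap (r_i + H)$ over a system of representatives $r_1, \ldots, r_n \in R$ of $G/H$; by (2) each piece satisfies $T \cap (r_i + H) \sim r_i + (T \cap H) \subeq r_i + S$, and taking the union gives $T \subeq \bigcup_i (r_i + S) \subset R + S$. The main (modest) obstacle is the Grothendieck identity in (5), which requires combining the cofiniteness from (1) with the infinitude from (4) to ensure the relevant intersection is nonempty; everything else is routine manipulation of the relations $\sim$ and $\subeq$.
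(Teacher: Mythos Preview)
Your proof is correct and follows essentially the same approach as the paper's: both establish (1) by writing $x=t_1-t_2$ and chaining translatability, derive (2) from (1) by intersecting, get (4) by pigeonhole on cosets plus (2), verify (5) via $(x+T)\cap H$, and obtain (6) by partitioning $T$ over coset representatives. The only cosmetic differences are in (3), where the paper constructs $t=\sum_{f\in F} b_f$ explicitly while you argue by cofiniteness, and in the Grothendieck identity of (5), where the paper finds $c\in T$ with $a+c\in H$ via (4) whereas you intersect $T\cap(T-h)$ with $H$; both variants are equally valid and of the same flavor.
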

\begin{proof}
Since $G=T-T$, we may write $x=a-b$ where $(a,b)\in T^2$.
Then
\[
x+T = (T+a)-b \sim T-b \sim (T+b) - b = T
\]
so that $x+T\sim T$ because the relation $\sim$ is transitive. 

If $A \subset G$, then
$$
T\cap (x+A)=x + (T-x) \cap A \sim x + T \cap A
$$
since $T \sim T-x$.

For part (3), for each $x \in F$ we write $x = a_x - b_x$ where $a_x, b_x \in T$. Thus $t = \sum_{x \in F} b_x$ satisfies $t \in T$ and $t+x \in T$ for all $x \in F$. 

If $H$ has finite index, there exists a finite set $F$ such that $G=\bigcup_{x\in F}(x+H)$.
By the pigeonhole principle, one of the sets $(x+H)\cap T$ for $x\in F$ must be infinite.
Hence all of them are infinite by part (2).

For part (5), the translatability of $T \cap H$ follows from part (2), since for any $x \in T\cap H$, we have $x + T \cap H \sim T \cap (x+H)=T\cap H$. 
Now let $x$ be any element of $H$. 
Then there exist $a , b \in T$ such that $x = a-b$. By part (4), there exists $c \in T$ such that $a+c \in H$. 
We also have $b+c \in H$. 
Therefore, $x = (a+c)-(b+c) \in T \cap H - T \cap H$. 

For part (6), notice that we may assume that $R$ is finite, and in this case,
$$ T = \bigcup_{r \in R} (r+H) \cap T \sim \bigcup_{r \in R} \left( r + H \cap T \right) \subeq \bigcup_{r \in R} (r + S) = R+S.$$
\end{proof}
We have a good understanding of the structure of translatable semigroups. Since we will not use this result, its proof is given in Appendix.
\begin{proposition*}[Proposition \ref{prop:structure}]
Let $T$ be a translatable semigroup. Then either $T$ is a group (i.e. $T$ equals its Grothendieck group $G_T$), or $T\sim C\oplus x\N$, where $x\in T$ and $C$ is a finite subgroup of $G_T$.
\end{proposition*}
 As a consequence of this structure result, any translatable semigroup $T$ admits a basis of any order $h\ge 2$ (Proposition \ref{prop:anyOrder}). This shows that our theorems are not vacuous in \textit{any} translatable semigroup.

In proving our results, we will often have to translate a basis by an element of $G_T$, and the translated set is not necessarily a subset of $T$. It is therefore advantageous to introduce a slightly more general notion of basis. 
For $A\subset G_T$, we say that $A$ is an \em additive $G_T$-basis \em (or simply a \em $G_T$-basis\em) of $T$ if there exists $h\ge 1$ such that $T\subeq hA$.
Again the order $\ord_{T}^*(A)$ of the basis $A$ over $T$ is then the minimal such $h$.
Note that any basis of $T$ is automatically also a $G_T$-basis of $T$ of the same order. We can then speak about regular, exceptional and essential subsets of $G_T$-bases in the same way as for bases.

\subsection{A generalization of the Erd\H{o}s-Graham criterion}
In the early eighties, Erd\H{o}s and Graham proved \cite[Theorem 1]{eg} that if $A$ is a basis of $\N$ and $a\in A$, then $A \setminus \{a\}$ is a basis of $\N$ if and only if $\gcd\left(A\setminus \{a\} - A\setminus \{a\}\right) =1$. 
This criterion was generalized to groups in \cite[Lemma 7]{llp}, as we now recall. 
Let $T$ be a translatable semigroup and $G_T$ be its Grothendieck group.
For $B\subset G_T$ (in particular for $B\subset T$), let $\langle B \rangle$ be the subgroup of $G=G_T$ generated by $B$.
The criterion states that if $A$ is a basis of $G$ and $a\in A$, then $A \setminus \{a\}$ is a basis of $G$ if and only if $ \langle A\setminus \{a\} - A\setminus \{a\} \rangle =G$. We now generalize further this criterion to translatable semigroups and exceptional subsets instead of exceptional elements. 

We first prove the following more general form of \cite[Lemma 7]{llp}.  
\begin{lemma} \label{lem:weakbasis}
Let $T$ be a translatable semigroup and $G$ be its Grothendieck group.
Let $s, t, h \ge 1$. Suppose $B \subset G$ and $a \in G$ satisfy
$$T \subeq   \bigcup_{i= h - t +1}^h (i B + (h-i)a ).$$
Suppose $(sB + a) \cap (s+1)B \neq \emptyset$ (in particular, this is the case if $sB-sB =G$). Then  $T \subeq h'B$ where $h'= (t-1) s + h$. 
\end{lemma}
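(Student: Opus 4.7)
The plan is to exploit the single relation provided by $(sB+a)\cap(s+1)B\neq\emptyset$ by repeatedly trading one copy of $a$, together with $s$ elements of $B$, for $s+1$ elements of $B$, and to use translatability to absorb the "book-keeping" cost of this trade into the bound for cofinitely many $z\in T$.

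Concretely, pick $u_0\in sB$ and $v_0\in(s+1)B$ with $a+u_0=v_0$; these exist by hypothesis. The point is that in $G$ we now have the identity
\[
(h-i)a + (h-i)u_0 = (h-i)v_0 \qquad \text{for every } i\in\{h-t+1,\dots,h\}.
\]
I would then subtract the element $(t-1)u_0\in G$ from $z$ and use Lemma \ref{lem:translatable}(1) to see that $T\sim (t-1)u_0 + T$, so for all but finitely many $z\in T$ one has $z-(t-1)u_0\in T$. For such $z$, by hypothesis (and discarding finitely many more exceptions), $z-(t-1)u_0\in iB+(h-i)a$ for some $i\in\{h-t+1,\dots,h\}$; write
\[
z = (t-1)u_0 + b + (h-i)a,\qquad b\in iB.
\]
Splitting $(t-1)u_0=(h-i)u_0+(t-1-(h-i))u_0$ (legal since $h-i\le t-1$) and applying the identity above yields
\[
z = \bigl(t-1-(h-i)\bigr)u_0 + b + (h-i)v_0,
\]
which exhibits $z$ as a sum of $(t-1-(h-i))s + i + (h-i)(s+1) = (t-1)s + h = h'$ elements of $B$, i.e.\ $z\in h'B$.

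Since only finitely many $z\in T$ were excluded along the way (those for which $z-(t-1)u_0\notin T$, together with the finite exceptional set in the hypothesis translated by $(t-1)u_0$), this gives $T\subeq h'B$ as required. The only place a subtlety could arise is making sure the "subtract first, then decompose" step is legitimate in a semigroup rather than a group: this is exactly what translatability delivers, via Lemma \ref{lem:translatable}(1), which guarantees $T\sim x+T$ for arbitrary $x\in G$ (not merely $x\in T$). Everything else is a straightforward count, so I do not anticipate a genuine obstacle; the main conceptual content is choosing to pre-subtract $(t-1)u_0$ up front rather than trying to manipulate $z$ directly, which is forced by the need to avoid formal subtractions when writing $z$ as an element of $h'B$.
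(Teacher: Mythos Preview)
Your proof is correct and follows essentially the same approach as the paper: pick an element witnessing $(sB+a)\cap(s+1)B\neq\emptyset$, use translatability to shift into the hypothesized union, and trade each copy of $a$ (together with $s$ elements of $B$) for $s+1$ elements of $B$ via the relation $a+u_0=v_0$. The only cosmetic difference is that the paper pre-subtracts $(t-1)v_0$ rather than $(t-1)u_0$, so it lands in $h'B+(t-1)a$ and needs one more application of translatability at the end; your choice lands directly in $h'B$, a small but pleasant simplification.
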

\begin{proof}
Suppose $c \in (sB + a) \cap (s+1)B$. Then $2c \in (2sB +2a) \cap ((2s+1)B + a) \cap (2s+2)B$. Continuing in this way yields
$$(t-1) c \in \bigcap_{i=0}^{t-1}  \big( ((t-1) (s+1) -  i)B + ia \big).$$
For all but finitely many $x \in T$, we have $x \in (t-1)c + T$, and the hypothesis implies that for all but finitely many of them,
$$x  \in (t-1) c+ \bigcup_{i=0}^{t-1} ((i+h-t+1) B + (t-1-i)a).$$
It follows that for all but finitely many $x \in T$, we have
$$x = (x - (t-1) c) + (t-1) c \in ((t-1)(s+1) + h-t+1) B + (t-1)a = h' B + (t-1)a.$$
Since $T \sim T + (t-1)a$, this implies that $T \subeq h'B$, as desired.
\end{proof}

We can now state our generalization of the Erd\H{o}s-Graham criterion.
\begin{lemma} \label{lem:eg}
Let $T$ be a translatable semigroup and $G$ be its Grothendieck group.
Let $A$ be a $G$-basis of $T$. Let $F$ be a finite subset of $A$. 
Then $A \setminus F$ is a $G$-basis of $T$ if and only if $\langle A\setminus F - A\setminus F \rangle=G$. 
\end{lemma}

In the case of $\N$, this was proved by Nash and Nathanson in \cite[Theorem 3]{nn}. 
Their proof uses the fact that, in $\N$, any set of positive Schnirelmann density is a basis. 
Our argument is different from theirs.

\begin{proof}
Let $B=A\setminus F$. 
To prove the ``only if'' direction, let us suppose that $H = \langle B - B \rangle \subsetneq G$.
Let us prove that $T \not \subeq \ell B$ for any $\ell \ge 1$. 
Let $\ell \ge 1$. We may suppose that $\ell B \cap T$ is infinite, since otherwise we are done.
Note that $\ell B$ lies in a coset $x+H$ for some $x\in G$. 
In particular, $(x+H)\cap T$ is infinite. 
Let $y\in G\setminus (x+H)$; by Lemma \ref{lem:translatable} part (2), we have $(y+H)\cap T\sim y-x+(x+H)\cap T$ so $(y+H)\cap T$ is an infinite subset of $T$ that does not meet $\ell B$. In other words, $T \not \subeq \ell B$, as desired. 

We now prove the ``if'' direction. 
First, note that there exists $s\ge 1$ such that $sB\cap (s+1)B\neq \emptyset$. Indeed, let $b\in B$. 
Since $b\in G=\langle B-B\rangle$, there exists $s\ge 1$ such that $b\in s(B-B)$. 
Therefore, there exists $(x,y)\in (sB)^2$ such that $b=y-x$. 
Now $y=x+b\in sB\cap (s+1)B$ yields the desired nonempty intersection.
According to Lemma \ref{lem:weakbasis} (with $a=0$), it now suffices to show that $T \subeq \bigcup_{i=1}^{\ell} iB$ for some $\ell \ge 1$. 
Since $\langle B - B \rangle=G$, each element $x \in F$ has a representation of the form
 \begin{equation} \label{eq:repx}
  x = \sum_{i=1}^{s_x} (a_i(x) - b_i(x)),
 \end{equation}
where $s_x\in\N$ and $a_i(x), b_i(x) \in B$.
Since $A$ is a $G$-basis of $T$, let $h\ge 1$ satisfy $T \subeq hA$. 
All but finitely many elements $g \in T$ can be written as
$$g = \sum_{x \in F} m_x(g) x +y,$$
where $m_x(g) \ge 0$ and $\sum_{x \in F} m_x(g) \le h$ whereas $y\in(h-\sum_{x\in F}m_x(g))B$. 
Replacing each occurrence of $x \in F$ with \eqref{eq:repx} and translating by $g_0=h \sum_{x \in F} \sum_{i=1}^{s_x} b_i(x)\in T$, we find that 
\begin{align*}
g + g_0 = & \sum_{x \in F} \sum_{i=1}^{s_x} \left( m_x(g) a_i(x) + (h-m_x(g)) b_i(x) \right) + y,
\end{align*}
where the right-hand side is a sum of 
$$h \sum_{x \in F} s_x + h - \sum_{x \in F} m_x(g)$$ 
elements in $B$. 
Let $\ell = h \sum_{x \in F} s_x + h$.
This shows that $g_0+T \subeq \bigcup_{i=1}^{\ell} iB$ and by translatability, we conclude.
\end{proof}
As pointed out by Nash and Nathanson \cite{nn}, the conclusion of Lemma \ref{lem:eg} is no longer true for the semigroups $T=\N$ or $T=\Z$ if $F \subset A$ is allowed to be infinite. For example, consider $A = \{1 \} \cup \{2n: n \in T \}$, a basis of order 2 of $T$, and $F=\{n\in T : \forall k\ge 1, \, n \neq 6^k \}$. 

More generally, let $T$ be a translatable semigroup and $h\ge 2$. 
We invoke the construction of a basis $A$ of order $h$ in
Proposition \ref{prop:anyOrder}.
With the notation of that construction, let $B=\bigcup_{i=0}^\infty \Lambda_i\subset A$ and $F=A\setminus B$.
Then $\langle B-B\rangle =G_T$.
However, for any $\ell \ge 1$, the sumset $\ell B$ misses all elements whose support has cardinality strictly larger than $\ell$, so $B$ is not a basis.
This means that in any translatable semigroup, the finiteness of $F$ is crucial for Lemma \ref{lem:eg}.

\subsection{Characterizations of exceptional and essential subsets}
As demonstrated by Lemma \ref{lem:eg}, the subgroups $\langle A\setminus F - A\setminus F \rangle$, where $F$ is a finite subset of a given basis $A$, play an important role. We now prove some preliminary results on these subgroups. 
The next lemma states that whenever $F$ is a finite subset of $A$, the subgroup $\langle A\setminus F - A\setminus F \rangle$ cannot be too small.
\begin{lemma} \label{lem:index}
Let $T$ be a translatable semigroup and $G$ its Grothendieck group. 
Let $A$ be a subset of $G$ such that $T \subeq hA$ for some $h\ge 2$ and let $F$ be a finite subset of $A$. 
Let $H=\langle A\setminus F - A\setminus F \rangle$.
Then for any $x\in A\setminus F$, we have $(h-1)(F\cup\{x\})+H=G$.
Consequently,
$$[G:H] \le \binom{h + |F|-1}{h-1}.$$
\end{lemma}
\begin{proof}
By the definition of $H$, we have $A\setminus F \subset x +H$, so that $A\subset (x+H)\cup F$ and $A$ meets a finite number of cosets of $H$.
This fact and the finiteness of $T\setminus hA$ imply that the projection of $T$ in $G/H$ is finite. 
However, $T-T=G$, so $G/H$ is finite.

Let $g\in G/H$. We may write $g=t+H$ for some $t\in T$. 
Now $hA\subset hF\cup \bigcup_{i=0}^{h-1}(iF+(h-i)x+H)$.
Note that $hF\cup (T\setminus hA)$ is finite and $(t+H)\cap T$ is infinite by Lemma \ref{lem:translatable}. 
This implies that $g=t'+H$ for some $t'\in \bigcup_{i=0}^{h-1}(iF+(h-i)x)$.
Finally,
\[
G\subset H+\bigcup_{i=0}^{h-1}(iF+(h-i)x)=H+x+
(h-1)(F\cup \{x\})
\]
as desired. 
This implies that $[G : H]\le \abs{(h-1)F'}$ where $F'=F\cup \{x\}$ has cardinality $\abs{F}+1$.
The bound follows from counting the number of $(h-1)$-combinations of elements from $F'$ with repetition allowed.
\end{proof}

We are now ready to prove the first part of Theorem \ref{th:ElowBounds}.

\begin{proof}[Proof of Theorem \ref{th:ElowBounds}(i)]
Lemma \ref{lem:index} implies that if $G$ does not have proper subgroups of index at most $\binom{h + k-1}{h-1}$, then a basis $A$ of order at most $h$ cannot contain an exceptional (and in particular essential) subset $F$ of cardinality at most $k$. 
This yields the first implication of Theorem \ref{th:ElowBounds}$(i)$.
For the second one, let $G$ be an infinite abelian group and $H$ a proper subgroup of finite index. 
Let $R$ be a (finite) set of distinct representatives modulo $H$. Then $A=R\cup H$ is a basis of order 2
of $G$ and $R$ is an exceptional set, which contains an essential subset. Therefore, $E_G(2,k) >0$ for some $k$. (We will encounter similar arguments in Section
\ref{sec:lowbounds}.)
\end{proof}

Lemma \ref{lem:eg} gives the following characterization of essential subsets of a basis. 
\begin{corollary} \label{cor:ess}
Let $T$ be a translatable semigroup, $G$ its Grothendieck group and $A$ a $G$-basis of $T$ and $E \subset A$ be a finite subset. 
Then $E$ is an essential subset of $A$ if and only if the following two statements hold.
\begin{enumerate}
\item $H=\langle A\setminus E - A\setminus E \rangle$ is a proper subgroup of $G$.
\item $G/H$ is generated by $\overline{x-a}$, where $x$ is any element of $E$ and $a$ is any element of $A \setminus E$.
\end{enumerate}
In particular, if $E$ is essential then $G/H$ is a finite cyclic group.
\end{corollary}
\begin{proof}
Lemma \ref{lem:eg} implies that $E$ is essential precisely when $G \neq H$, but $G = \langle (A\setminus E)\cup \{x \} - (A\setminus E) \cup \{x \} \rangle$ for any $x \in E$. 
The claimed characterization follows by noting that $\langle (A\setminus E)\cup \{x \} - (A\setminus E) \cup \{x \} \rangle$ is generated by $H \cup \{ x-a \}$ for any $a \in A\setminus E$. 

The second claim follows from the fact that $G/H$ is finite, by Lemma \ref{lem:index}. 
\end{proof}

The next lemma gives a correspondence between  essential subsets and proper subgroups.
\begin{lemma}
\label{lem:strictSubgroup}
Let $T$ be a translatable semigroup, $G$ its Grothendieck group  and $A$ be a $G$-basis of $T$.
Let $E$ be an  essential subset of the basis $A$ and $F$ be any  subset of $A$ such that $E\not \subset F$.
Then $\langle A\setminus (E\cup F) - A\setminus (E\cup F) \rangle \subsetneq \langle A\setminus F - A\setminus F \rangle$.
\end{lemma}
\begin{proof}
We have
$\langle A\setminus (E\cup F) - A\setminus (E\cup F)\rangle
\subset \langle A\setminus E - A\setminus E\rangle \cap
\langle A\setminus F - A\setminus F \rangle$.
Further, since $ A\setminus (E\cap F)=(A\setminus E) \cup (A\setminus F)$, we have
\[
\langle A\setminus E - A\setminus E\rangle +
\langle A\setminus F - A\setminus F \rangle=\langle A\setminus (E\cap F) - A\setminus (E\cap F)\rangle.
\]
Since $E\cap F\subsetneq E$, it follows from the essentiality of $E$ and Lemma \ref{lem:eg} that the right-hand side is $G\neq \langle A\setminus E - A\setminus E\rangle$.
So $\langle A\setminus F - A\setminus F\rangle \not \subset \langle A\setminus E - A\setminus E \rangle$, which finally yields the desired result.
\end{proof}

\subsection{Invariant means} \label{sec:invariant}
Let $(T,+)$ be an abelian semigroup. Let $\ell^{\infty}(T)$ denote the set of all bounded functions from $T$ to $\R$. 
An \em invariant mean \em on $T$ is a linear functional $\Lambda: \ell^{\infty}(T) \rightarrow \R$ satisfying the following conditions.
\begin{enumerate}
\item[(M1)] $\Lambda$ is \textit{nonnegative}: if $f \ge 0$ on $T$, then $\Lambda(f) \ge 0$.
\item[(M2)] $\Lambda$ has \textit{norm} 1: $\Lambda(1_T)=1$ where $1_T$ is the characteristic function of $T$.
\item[(M3)] $\Lambda$ is \textit{translation-invariant}: $\Lambda( \tau_x f) = \Lambda( f)$ for any $f \in \ell^{\infty}(T)$ and $x \in T$, where $\tau_x$ is the translation by $x$: $\tau_x f(t) = f(x+t)$.
\end{enumerate}

Note that by restricting $\Lambda$ to indicator functions of subsets of $T$, we induce a function $d: \mathcal{P} (T) \rightarrow [0,1]$, that we will usually call \emph{density} satisfying the following three properties.
\begin{enumerate}
\item[(D1)] $d$ is \em finitely additive\em, i.e. if $A_1, \ldots, A_n \subset T$ are disjoint, then
$$d \left(\displaystyle\bigcup_{i=1}^n A_i\right) = \sum_{i=1}^n d(A_i).$$
\item[(D2)] $d$ is \em translation-invariant\em, i.e. for all $A \subset T$ and $x \in T$, we have $d(x+A) = d(A)$.
\item[(D3)] $d$ has  \em total mass \em $1$, i.e. $d(T) =1$.
\end{enumerate}
Note that the axiom (D1) implies that for any $A_1, \ldots, A_n \subset T$, we have $d(\bigcup_{i=1}^n A_i)\le \sum_{i=1}^n d(A_i)$. 
Also, if $A$ is finite, then $d(A)=0$.

If there exists an invariant mean on $T$, then $T$ is said to be \textit{amenable}. 
It is known that all abelian semigroups are amenable (for a proof, see \cite[Theorem 6.2.12]{landmark}). 
However, even in $\N$, all known proofs of the existence of invariant means are nonconstructive\footnote{Observe that popular densities such as the lower asymptotic one do not satisfy the equality of (D1): only an inequality is true in general.}, and require the axiom of choice in one way or another (e.g. the Hahn-Banach theorem or ultrafilters). 

In Sections \ref{sec:x} and \ref{sec:s}, we will use the existence of invariant means as a blackbox and make crucial use of their properties to prove our results. 
For now, we record the following simple fact, which is an immediate extension of the so-called prehistorical lemma to invariant means.
\begin{lemma}\label{lem:prehistoric}
Let $T$ be a cancellative abelian semigroup, $G$ be its Grothendieck group and $d$ be a density on $T$. 
If $A, B \subset T$ and $d(A) + d(B) > 1$ then $T \subset A-B\subset G$. In particular, if $T$ is a group then $T=A-B$.
\end{lemma}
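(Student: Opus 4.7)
My plan is to adapt the proof of the classical prehistorical lemma (which is usually stated in $\N$ for the lower asymptotic density) to the present abstract setting, making essential use of finite additivity (D1) and translation invariance (D2), which are stronger than the corresponding properties of the asymptotic density. The key observation is that although one cannot a priori translate sets in $T$ by arbitrary elements of $G$, the hypotheses $B \subset T$ and $t \in T$ together with the semigroup structure guarantee $t + B \subset T$, and then (D2) applies to give $d(t+B) = d(B)$.

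Concretely, I would fix an arbitrary $t \in T$ and aim to show $t \in A - B$. I would argue by contradiction: if $t \notin A - B$, then $A \cap (t+B) = \emptyset$, since any $a \in A \cap (t+B)$ would yield $a = t+b$ with $b \in B$, hence $t = a-b \in A-B$. Disjointness together with (D1) then gives
\[
d\bigl(A \cup (t+B)\bigr) = d(A) + d(t+B) = d(A) + d(B) > 1.
\]
On the other hand $A \cup (t+B) \subset T$, and monotonicity (which follows from (D1) applied to the disjoint decomposition $T = \bigl(A \cup (t+B)\bigr) \sqcup \bigl(T \setminus (A \cup (t+B))\bigr)$ together with the nonnegativity inherited from (M1)) combined with (D3) forces $d(A \cup (t+B)) \le d(T) = 1$, a contradiction. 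Hence $t \in A - B$, and letting $t$ vary we obtain $T \subset A - B \subset G$.

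The final assertion is immediate: when $T$ is a group we have $G = T$, so $A - B \subset T - T = T$, and combining with $T \subset A - B$ yields $T = A-B$.

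I do not anticipate any genuine obstacle; the only subtlety worth flagging is the careful use of translation invariance, which is only postulated for translations by elements of $T$ (not of $G$). The proof is organized precisely so that the translate $t + B$ stays inside $T$, at which point (D1)--(D3) suffice and the argument is a one-line contradiction.
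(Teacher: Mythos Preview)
Your proof is correct and follows essentially the same approach as the paper: use (D2) to get $d(t+B)=d(B)$, then (D1) and (D3) to force $A\cap(t+B)\neq\emptyset$, whence $t\in A-B$. The only cosmetic difference is that you phrase the key step as a contradiction while the paper states it directly; your remark that $t+B\subset T$ (needed for (D2)) is a point the paper leaves implicit.
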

\begin{proof}
Let $t\in T$. By (D2), $d(A)+d(t+B)=d(A)+d(B)>1$.
By axioms (D1) and (D3), we infer that $A\cap (t+B)\neq\emptyset$.
Let $a=t+b\in A\cap (t+B)$, then $t=a-b\in A-B$.
\end{proof}

We will also make use of the following observation, which says that if $T$ is translatable, then any invariant mean on $T$ can be extended to all of $G$ in a trivial way. 
\begin{lemma} \label{lem:extension}
Let $T$ be a translatable semigroup, $G$ be its Grothendieck group and $\Lambda$ be an invariant mean on $T$. 
For $f \in \ell^{\infty}(G)$, define $\Lambda' (f) = \Lambda(f |_T)$, where $f |_T$ is the restriction of $f$ on $T$. 
Then $\Lambda'$ is an invariant mean on $G$. 
\end{lemma}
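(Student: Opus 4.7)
The plan is to check the three axioms (M1), (M2), (M3) in turn for $\Lambda'$. Properties (M1) and (M2) are essentially automatic: if $f \ge 0$ on $G$ then $f|_T \ge 0$ on $T$, so $\Lambda'(f) = \Lambda(f|_T) \ge 0$ by nonnegativity of $\Lambda$; and $1_G|_T = 1_T$, so $\Lambda'(1_G) = \Lambda(1_T) = 1$. The real content is in verifying translation invariance under arbitrary $x \in G$.

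For $x \in T$, the restriction $(\tau_x f)|_T$ coincides with $\tau_x (f|_T)$ as a function on $T$ (since $x + t \in T$ whenever $t \in T$), and translation invariance of $\Lambda$ on $T$ gives the equality $\Lambda'(\tau_x f) = \Lambda'(f)$ directly. For general $x \in G$, since $G = T - T$, I would write $x = a - b$ with $a, b \in T$ and compare the two functions on $T$:
\[
\phi_1(t) = f(a+t), \qquad \phi_2(t) = f(x+t) = f(a - b + t).
\]
The key observation is that $(\tau_b \phi_2)(t) = \phi_2(b+t) = f(a+t) = \phi_1(t)$ for every $t \in T$ (cancellation inside $G$ is what makes this identity exact). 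Since $b \in T$, the $T$-invariance of $\Lambda$ yields $\Lambda(\phi_2) = \Lambda(\tau_b \phi_2) = \Lambda(\phi_1)$. But $\phi_1 = (\tau_a f)|_T = \tau_a(f|_T)$ on $T$ with $a \in T$, so applying $T$-invariance of $\Lambda$ a second time gives $\Lambda(\phi_1) = \Lambda(f|_T) = \Lambda'(f)$. Chaining the two identities gives $\Lambda'(\tau_x f) = \Lambda(\phi_2) = \Lambda'(f)$, as required.

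I do not expect real obstacles: translatability is used only implicitly in the background (to guarantee that shifts do not distort cofinite information, although for this particular statement one does not even need a cofiniteness argument; the algebraic identity $x + t = a + t' $ with $t' = t - b$ transferred into $T$ by a further $T$-translation suffices). The mildly delicate point is keeping the domains straight, since $\tau_x f$ lives on $G$ while we are applying $\Lambda$ only to the restriction to $T$, so one must factor the $G$-translation by $x$ through two genuine $T$-translations by $a$ and $b$. Once that factorization is made explicit, the two applications of (M3) for $\Lambda$ close the argument.
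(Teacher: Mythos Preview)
Your proof is correct and follows the same overall reduction as the paper --- both observe that since $G=T-T$ it suffices to verify (M3) for $x\in T$, and then extend to general $x$ by writing $x=a-b$. The difference lies in how the case $x\in T$ is handled. The paper decomposes $f|_{T-x}=f|_T+f|_{(T-x)\setminus T}$ and then appeals to the finiteness of $T\setminus(T+x)$ (i.e.\ translatability) to kill the error term. You instead observe directly that $(\tau_x f)|_T=\tau_x(f|_T)$ as functions on $T$, using only the semigroup property $x+T\subset T$, and then apply (M3) for $\Lambda$ once. Your argument is cleaner and, as you correctly note, does not actually require translatability: it shows that the lemma holds for any cancellative abelian semigroup $T$ with Grothendieck group $G$. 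You also spell out the reduction to $x\in T$ explicitly, whereas the paper simply asserts it.
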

\begin{proof}
Since $G=T-T$, it suffices to verify (M3) for any $f \in \ell^{\infty}(G)$ and $x \in T$. We have 
\begin{eqnarray*}
\Lambda'( \tau_x f) &=& \Lambda( (\tau_x f)|_T) = \Lambda( \tau_x (f|_{T-x}) ) \\
&=& \Lambda( \tau_x (f|_{T} ) ) + \Lambda( \tau_x (f|_{(T-x)\setminus T} ) ) \\
&=& \Lambda( f|_{T}  ) +  \Lambda( f|_{T\setminus (T+x)} )  \\
&=& \Lambda'( f  ) 
\end{eqnarray*}
since $T\setminus (T+x)$ is finite and $f$ is bounded.
\end{proof}

When $T$ is a group, in proving Theorem \ref{th:s1}, we will require the following additional property of $d$.
\begin{enumerate}
\item[(D4)] $d$ is invariant with respect to inversion, i.e. $d(A) = d(-A)$ for all $A \subset T$. 
\end{enumerate}
This property may not be satisfied by all invariant means, but invariant means having this property abound (see for instance \cite[Theorem 1]{chou}).

\section{Essential subsets of an additive basis} \label{sec:e}

\subsection{Finiteness of the set of essential subsets} We first prove Theorem \ref{th:e1}.

\begin{proof}[Proof of Theorem \ref{th:e1}]
Let $T$ be a translatable semigroup and $G$ be its Grothendieck group. 
Let also $A$ be an additive $G$-basis of order $h \ge 1$ over $T$. 
To obtain a contradiction, we assume that the set $\mathcal{F}_A$ of all  essential subsets of $A$ is infinite.
It follows that $h \ge 2$ and there exists an infinite sequence $(F_i)_{i \ge 1}$ of pairwise distinct elements of $\mathcal{F}_A$. 
In addition, extracting an appropriate infinite subsequence of $(F_i)_{i \ge 1}$ if need be, we may assume that $F_{i+1}\not \subset\bigcup_{j=1}^i F_j$ for all $i \ge 1$.

Let us set $H_i=\langle A\setminus \bigcup_{j=1}^iF_j-A\setminus \bigcup_{j=1}^iF_j\rangle$ for all $i \ge 1$.
On the one hand, it follows from Lemma \ref{lem:strictSubgroup} that $(H_i)_{i \ge 1}$ is a decreasing sequence of proper subgroups of $G$, and from Lemma \ref{lem:index} that, for every $i \ge 1$, the quotient group $G_i=G \slash H_i$ is finite (in particular, $H_i$ is infinite).
On the other hand, for every $i \ge 1$, there is a unique coset $K_i$ of $H_i$ such that $A\setminus K_i$ is finite.
In particular, one has $K_j \subset K_i$ for any $j \ge i$.

Now, for each $i \ge 1$, let us define $d_i=\min\{\ell \ge 1 : |(\ell A )\cap X| = \infty, \ \forall X \in G_i\}$.
In other words, $d_i$ is the smallest integer $\ell \ge 1$ such that every coset of $H_i$ has an infinite intersection with $\ell A$.  
Alternatively,  one also has $d_i=\min\{\ell \ge 1 : G_i \subset K_i + (\ell-1)\pi_i(A)\}$ where, for every $i \ge 1$, $\pi_i$ denotes the canonical epimorphism from $G$ to $G_i$.

It is easily noticed that by definition, the sequence $(d_i)_{i \ge 1}$ is nondecreasing. 
Also, since $H_i$ is a proper subgroup of $G$ and $K_i$ is the only coset of $H_i $ having an infinite intersection with $A$, one has $2 \le d_i $ for all $i$. 
Finally, since $T\setminus hA$ is finite by assumption and each coset of $H_i$ has an infinite intersection with $T$, one has $d_i\le h$ for all $i$. 

At this stage, observe that by translatability, any translation of the original additive $G$-basis $A$ by an element $a \in G$ results in a new additive $G$-basis $A'=a+A$ of order $h$ itself over $T$. 
The sequence $(F'_i)_{i \ge 1}$ obtained by translating each $F_i$ by $a$ is then an infinite sequence of essential subsets of $A'$ satisfying $F'_{i+1}\not \subset\bigcup_{j=1}^i F'_j$ for all $i \ge 1$, and starting from which the previous definitions yield the very same sequences $(H_i)_{i \ge 1}$ and $(d_i)_{i \ge 1}$ as for $A$ itself.

Since the sequence $(d_i)_{i\ge 1}$ is nondecreasing and bounded,
we infer that it is stationary. Thus, we may fix some
$i\ge 1$ so that $d_j=d_i$ for all $j\ge i$.
Let $x_i \in G$ be such that $K_i=x_i+H_i$.
Now, using the just described translation-invariance of $(H_i)_{i \ge 1}$ and $(d_i)_{i \ge 1}$, we can assume from now on that $x_i=0$ and $K_i=H_i$. 
In particular, $d_i=\min\{\ell \ge 1 : G_i \subset (\ell-1)\pi_i(A)\}$.

It follows from the minimality of $d_i \ge 2$ that $G_i \subset (d_i-1)\pi_i(A)$ while $G_i \not \subset (d_i-2)\pi_i(A)$. In other words, there exists at least one coset $C_i$ of $H_i$ belonging to $(d_i-1)\pi_i(A) \setminus (d_i-2)\pi_i(A)$.
Now, pick an integer $j \ge i$. Recall that $d_j=d_i$ and let $L$ be any coset of $H_j$ such that $L \subset C_i$.

Since $d_j=d_i$, one has $L \in K_j + (d_j-1)\pi_j(A)=K_j + (d_i-1)\pi_j(A)$. 
Let $L_2,\dots,L_{d_i}$ be any $d_i-1$ elements of $\pi_j(A)$ such that in $G_j$, one has
$$L=K_j+L_2+ \dots + L_{d_i}.$$
For every $j \ge i$, let $f^i_j : G_j \rightarrow G_i$ be the group homomorphism sending every coset $L$ of $H_j$ to the unique coset of $H_i$ containing $L$.
Note also that by definition, one has $f^i_j \circ \pi_j = \pi_i$.
Since $f^i_j(K_j)=K_i =H_i$, applying $f^i_j$ to both sides of the  equality above in $G_j$ results in the following relation in $G_i$,
$$C_i=f^i_j(L_2)+ \dots + f^i_j(L_{d_i}).$$
For every $2 \le k \le d_i$, there exists by definition an element $a_k \in A$ such that $L_k=\pi_j(a_k)$.
However, $a_k \in K_i$ would imply $f^i_j(L_k)=(f^i_j \circ \pi_j)(a_k)=\pi_i(a_k)=K_i=H_i$ and readily give $C_i \in (d_i-2)\pi_i(A)$, which is a contradiction. 
As a result, $L_k \in \pi_j(A \setminus K_i)$, for every $2 \le k \le d_i$.
We now have all we need to complete our proof.

On the one hand, each $L_k$ can take at most $|\pi_j(A \setminus K_i)| \le |A \setminus K_i|$ values, so that the number of possible sums of the form $L_2+ \dots + L_{d_i}$ in $G_j$ is at most 
\[
{|A \setminus K_i|+d_i-1 \choose d_i},
\] 
which is independent of $j$.

On the other hand, there are $[H_i : H_j]$ cosets of $H_j$ that are contained in $C_i$, and in order for each of them to be an element of $K_j + (d_i-1)\pi_j(A)$, we must have 
$$[H_i : H_j] \le {|A \setminus K_i|+d_i-1 \choose d_i}.$$ 
However, since $[H_i : H_j]$ tends to infinity when $j$ does so, the previous inequality can only hold for finitely many integers $j \ge i$.
Therefore, the initial assumption that $\mathcal{F}_A$ is infinite leads to a contradiction indeed.
\end{proof}

\subsection{Bounding the number of essential subsets} We now prove parts $(i)$ and $(ii)$ of Theorem \ref{th:e2}.
Let $T$ be a translatable semigroup and $G=G_T$ be its Grothendieck group.
Let $h\ge 2,k\ge 1$ be integers.
The case $k=1$ is already covered in \cite[Theorem 2]{llp}
when $T$ is a group, and by Theorem \ref{th:EGT}, which we prove in the next section without using the current section, we 
conlude that $E_T(h,1)=E_G(h,1)\leq h-1$.
Henceforth we shall assume $k\ge 2$.
Let $A$ be a $G_T$-basis of order at most $h$ over $T$.
It readily follows from Theorem \ref{th:e1} that the set $\mathcal{F}$ of essential subsets of cardinality $k$ of $A$ is finite. 
Our aim is to bound $N=|\mathcal{F}|$ in terms of $h$ and $k$ alone. We will actually prove the following precise bounds:
\begin{equation} \label{eq:N1}
N \le (30 h \log k)^k
\end{equation}
and
\begin{equation} \label{eq:N2}
N \le (32 k \log k)^{h-1}.
\end{equation}
We first prove the following lemma.
\begin{lemma}
\label{lem:nBound}
Take a minimal sequence $F_1,\ldots, F_{n}$ of elements of $\mathcal{F}$ with the property that $\bigcup_{i\le n} F_i=\bigcup_{F\in\mathcal{F}}F$.
Then 
$n\le (h-1)(\log_2(ek\log_2 k)+\epsilon_k)$,
where $\epsilon_k\in [0,5]$ tends to zero as $k$ tends to infinity
and does not depend on $h$.
In particular $n\leq 11(h-1)\log k$.
\end{lemma} 
Thus $n\leq (h-1)(1+o_{k\rightarrow\infty}(1))\log_2  k$.
We will show later (Remark \ref{r:optimal}) that this bound is sharp, up to the error term. This is why we decided to state
it as an independent lemma.
\begin{proof}
By minimality, note that $F_i\not \subset\bigcup_{1\le j< i}F_j$ for any $i\in [n]$. 
Also, $|\bigcup_{F\in\mathcal{F}}F| \le nk$. 
Let $H_{i} =\langle A \setminus \bigcup_{j=1}^i F_j - A \setminus \bigcup_{j=1}^i F_j \rangle$. 
By Lemma \ref{lem:strictSubgroup}, one has
$$H_{n} \subsetneq H_{n-1} \subsetneq \cdots \subsetneq H_1 \subsetneq G.$$ 
Therefore,
\begin{equation} \label{eq:index-upper}
[G:H_{n}] \ge 2^n.
\end{equation}
On the other hand, it follows from Lemma \ref{lem:index} that
\begin{equation} \label{eq:index-lower}
[G:H_{n}] \le \binom{h + nk -1}{h-1}
\end{equation}
since $|\bigcup_{j=1}^n F_j| \le kn$. 
Combining \eqref{eq:index-upper} and \eqref{eq:index-lower}, one has $2^{n} \le \binom{h + nk -1}{h-1}$.
Using the elementary bound ${a \choose b} \le (\frac{ea}{b})^b$, we have 
\[
2^n \le \left( \frac{e(h-1+nk)}{h-1} \right)^{h-1}. 
\]
Denoting $t=\frac{n}{h-1}$, 
this implies that 
$ t\le \log_2 (e(1+kt))$.
Consider the real function $f:x\mapsto x-\log_2(e(1+ x k))$.
Observe that 
\begin{align*}
f(\log_2(ek\log_2k))&=\log_2(ek\log_2k)-\log_2(e(1+k\log_2(ek\log_2k))\\
&=\log_2(ek\log_2k)-\log_2 \left( ek\log_2k \left( 1+\frac{1}{k\log_2k}+\frac{\log_2(e\log_2k)}{\log_2k} \right) \right)\\
&=-\log_2 \left( 1+\frac{1}{k\log_2k}+\frac{\log_2(e\log_2k)}{\log_2k} \right)\\
&\ge -\frac{1}{k\log_2k}-\frac{\log_2(e\log_2k)}{\log_2k}\\
&> -1.35\frac{\log_2(e\log_2k)}{\log_2k}.
\end{align*}
Further, $f'(x)\ge 1-1/(x\log 2)$, in particular
$f'(x)\ge 1-1/\log(2e)\ge 2/5$ for all $x\ge \log_2(ek\log_2k)\ge \log_2(2e)$ (here we use $k\ge 2$).
Hence $f(x)>0$ whenever $x>\log_2(ek\log_2k)+3.4\frac{\log_2(e\log_2k)}{\log_2k}$.
Since $f(t)\le 0$, this yields the desired bound, with $\epsilon_k=3.4\frac{\log_2(e\log_2k)}{\log_2k}\le 5$.

To obtain $n\leq 11(h-1)\log k$,
it suffices to note that $e\log_2k\leq k^{3/2}$ for all $k\ge 2$
and $\epsilon_k\le 5$ so that 
$\log_2(ek\log_2 k)+\epsilon_k\le 7.5\log_2k\le 11\log k$.
\end{proof}
We return to the proof of Theorem \ref{th:e2}.
We start with the first item of that theorem.
Using again the bound ${a \choose b} \le (\frac{ea}{b})^b$, we have 
\begin{equation} \label{eq:N}
N\le \binom{nk}{k}\le (en)^k.
\end{equation}
Injecting Lemma \ref{lem:nBound} above yields $N \le (30 h \log k)^k\ll_k h^k$, as desired.\\

We now prove the second item of Theorem \ref{th:e2}. Let
$H =\langle A \setminus \bigcup_{F\in\mathcal{F}}F - A \setminus \bigcup_{F\in\mathcal{F}}F \rangle$.
Then we prove the following lemma.
\begin{lemma}
There exists an injection from $\mathcal{F}$ to the set of cyclic subgroups of $G/H$.
\end{lemma}
\begin{proof}
First consider the map $E\mapsto H_E=\langle A\setminus E-A\setminus E\rangle$ defined on $\mathcal{F}$.
It is injective because of Lemma \ref{lem:strictSubgroup}.
Note that $H\le H_E$ for any $E\in\mathcal{F}$.
Further the map $H_E\mapsto H_E/H$ is also an injection (as a restriction of the classical bijection
between subgroups of $G$ containing $H$ and subgroups of $G/H$).
Let $Q=G/H$. This is a finite abelian group. The theory of characters of finite
abelian groups implies that there exists an involution $f$ of the set of subgroups of $Q$ such that
for any $K\le Q$, the groups $K$ and $ Q/f(K)$
are isomorphic; cf \cite{groupprops}. Consider the map $E\mapsto f(H_E/H)$, which is injective as a composition
of three injective maps.
Finally, $f(H_E/H)\simeq (G/H)/(H_E/H)\simeq G/H_E$, which is cyclic by Corollary \ref{cor:ess}. 
\end{proof}
In particular, we have $N\le [G:H]$.
Because of Lemma \ref{lem:index}, we infer $N\le \binom{h-1+nk}{h-1}\le \left( \frac{e(h-1 + nk)}{h-1}\right)^{h-1}$
and injecting again Lemma \ref{lem:nBound}, we conclude $N \le \left( e (1  + 11 k \log k) \right)^{h-1} \le (32 k \log k)^{h-1}$.
\begin{remark}
Note that these proofs also show that $E_T(h,\le k) \ll_k h^k$ and $E_T(h,\le k) \ll_h (k \log k)^{h-1}$. One must simply replace \eqref{eq:N} by 
$N\le k\binom{nk}{k}$ by unimodularity of the binomial coefficients and the assumption that $n\ge 2$ (otherwise $n=N=1$).
\end{remark}

\begin{remark} \label{r:optimal}
We point out that Lemma \ref{lem:nBound}
is optimal up to the second order terms, for any $h$ and $k=2^{r-1}$ for some $r \ge 1$. Indeed, let $G=\bigoplus_{i=1}^h G_i$, where $G_i \cong \F_2^r$ for $i=1, \ldots, h-1$ and $G_h$ is an infinite group. Then 
$A = \bigcup_{i=1}^h G_i$ is a basis of order $h$ of $G$. Using Lemma \ref{lem:eg}, one can show that $F\subset A$ is an essential subset if, and only if, for some $ i \in [h-1]$, $F \subset G_i$ and $G_i\setminus F$ is a maximal subgroup of $G_i$. In other words, $F$ is the complement of a hyperplane of $G_i$ and has cardinality $k=2^{r-1}$. For each $1 \le i \le h-1$, in order to cover all complements of hyperplanes of $G_i$, we need at least $r$ of them. Hence, in order to satisfy the hypothesis of Lemma \ref{lem:nBound}, we need $n \ge (h-1)r = (h-1)(\log_2k+1)$.
What may not be optimal in the bounds \eqref{eq:N1} and \eqref{eq:N2} is how we infer an upper bound for the total number $N$ of essential subsets
from the upper bound on $n$.
\end{remark}

\subsection{Comparing $E_T$ and $E_{G_T}$} In this section
we prove Theorem \ref{th:EGT}. We first need the following generalization of Lemma \ref{lem:eg}.
\begin{lemma} \label{lem:nn}
Let $T$ be a translatable semigroup and $G$ its Grothendieck group. Let $A$ be a $G$-basis of $T$ and $F \subset A$ be any finite subset. 
Put $B=A\setminus F$ and $H= \langle B - B \rangle$. 
Let $b$ be an arbitrary element of $B$.
Then $T\cap H$ is a translatable semigroup of Grothendieck group $H$ and $B-b$ is an $H$-basis of $T\cap H$.
\end{lemma}

Clearly, Lemma \ref{lem:eg} is a special case of Lemma \ref{lem:nn} when $H=G$. In $\N$, Lemma \ref{lem:nn} was proved by Nash-Nathanson \cite[Theorem 1]{nn}. Again, Nash-Nathanson's proof is very specific to $\N$ (it uses Schnirelmann density and Schnirelmann's theorem). 
Our proof is different from theirs and works for any translatable semigroup. 
In fact, we use Lemma \ref{lem:eg} to prove Lemma \ref{lem:nn}, while Nash and Nathanson proceeded the other way round.

\begin{proof}
The fact that $T\cap H$ is a translatable semigroup of Grothendieck group $H$ is Lemma \ref{lem:translatable} part (5).
For $h$ large enough, and by translatability, we have
\begin{eqnarray*}
  T\sim T-hb \subeq h(A-b) &=& \bigcup_{i=0}^h \left(i(F-b) + (h-i)(B -b) \right) \\
 & \subset & \bigcup_{i=0}^h \left(i(F-b) + h(B -b) \right) \qquad \textup{since $0 \in B-b$}.
 \end{eqnarray*}
In particular,
\[
T \cap H \subeq \bigcup_{i=0}^h \left(i(F-b) + h(B-b) \right).
\] 
Since $F$ is finite, this means that there are finitely many translates $a_1 + h(B-b),\ldots, a_k + h(B-b)$ of $h(B-b)$ such that 
\[
T \cap  H \subeq \bigcup_{i=1}^k \left(a_i + h(B-b)\right).
\]
A priori $a_1, \ldots, a_k \in G$. But a translate $a_i+h(B-b)$ can have nonempty intersection with $H$ only if $a_i \in H$. Thus we may assume that $a_1, \ldots, a_k \in H$. 
Let $A' = h(B-b) \cup \{a_1, \ldots, a_k\} \subset H$, then the equation above shows that $T\cap H \subeq 2 A'$. Clearly $\langle hB - hB \rangle = H$. We now invoke Lemma \ref{lem:eg} with the set $A'$ and the translatable semigroup $T\cap H$ (whose Grothendieck group is $H$), and conclude that for some $k \ge 1$, 
$T\cap H \subeq kh(B-b)$, as desired. 
\end{proof}

Next we need the following lemma of independent interest, which is reminiscent of Hegarty's reduction \cite{hegarty2} of the study of $E_\N(h,k)$ to the postage stamp problem.
\begin{lemma}
\label{lem:twobases}
Let $T$ be a translatable semigroup 
and $G$ its Grothendieck group.
Let $H$ be a subgroup of $G$ of finite index.
Let $B$ be a subset of $G$ satisfying $\langle B  - B\rangle=H$ and $b$ be an arbitrary element of $B$. 
Let $F$ be a finite subset of $G$ disjoint from $B$ and $A=F\cup B$. 
Then the following assertions are equivalent:
\begin{enumerate}
\item $A$ is a $G$-basis of $T$.
\item 
\begin{enumerate}
\item $B-b$ is an $H$-basis of   $T \cap H$, and 
\item $\langle F-b + H\rangle =G$ (i.e. $\overline{F-b}$ generates $G/H$).
\end{enumerate}
\end{enumerate}
Further, if $h_1$ is minimal such that $h_1 ( (F-b) \cup \{0\})+H=G$, $h_2=\ord^*_{T\cap H} (B-b)$, and $h=\ord^*_T (A)$, then we have $h_1+1\le h \le h_1+h_2$. 
\end{lemma}
\begin{proof}
If (1) holds, then (2a) follows from Lemma \ref{lem:nn} and (2b) follows from Lemma \ref{lem:index}.

Now suppose (2) holds. Let $h_1$ be minimal such that $h_1 ( (F-b) \cup \{ 0 \})+H=G$ and $h_2=\ord^*_{T\cap H} (B-b)$. 
If $T \subeq hA$, then by Lemma \ref{lem:index} we have $(h-1)( (F-b) \cup\{0\})+H=G$ and therefore $h \ge h_1 +1$. 
We will now prove that $T \subeq (h_1 + h_2) A$. We have
\begin{eqnarray*}
(h_1 + h_2) (A-b) & = & \bigcup_{i=0}^{h_1 + h_2} \left( i(F-b)+(h_1 + h_2-i)(B-b) \right) \\
& \supset & \bigcup_{i=0}^{h_1}\left( i(F-b)+h_2(B-b) \right) \qquad \textup{since $0 \in B-b$} \\
&=& h_2(B-b)+h_1((F-b)\cup\{0\}).
\end{eqnarray*}
Since $h_2 (B-b)$ misses only finitely many elements of $T\cap H$ and $h_1 ((F-b)\cup\{0\})$ meets every coset of $H$, by Lemma \ref{lem:translatable} part (6) we know that $T \subeq (h_1 + h_2) (A-b)$, and $T \subeq (h_1 + h_2) A$ by translatability.
\end{proof}
We are now ready to prove Theorem \ref{th:EGT}.
\begin{proof}[Proof of Theorem \ref{th:EGT}]
Fix $h\ge 2,k\ge 1$.
In order to show that $E_T(h,k) = E_G(h,k)$, we will show that $E_T(h,k)\le E_G(h,k)$ and 
$E_G(h,k)\le E_T(h,k)$.
 
Let us first prove that $E_T(h,k)\le E_G(h,k)$.
Let $A$ be a basis of $T$ of order at most $h$. 
Our aim is to prove that $A$ has at most $E_G(h,k)$ essential subsets of cardinality $k$.

By Theorem \ref{th:e1}, we already know that $A$ has finitely many essential subsets. 
Let $F$ be the union of all essential subsets of $A$. 
From now on, and since the desired inequality readily holds true otherwise, we assume that $F$ is nonempty.
Let $B= A \setminus F$ and $H = \langle B - B \rangle$. 
By definition, $A= F\cup B$ and taking an arbitrary element $b \in B$, we have $B \subset H+b$.

By Lemma \ref{lem:index}, $H$ is a subgroup of finite index of $G$ so that Lemma \ref{lem:twobases} applies to the partition $A=F \cup B$. 
It follows that, since $A$ is a $G$-basis of $T$, the condition (2a) of Lemma \ref{lem:twobases} is satisfied, that is to say $B-b$ is an $H$-basis of $T \cap H$.

Also, let us prove that $F \cap (H+b) = \emptyset$. 
Assume on the contrary that there is an element $x \in F \cap (H+b)$. 
Then, there exists an essential subset $E'$ of $A$ such that $x \in E'$. 
Since $E' \subset F$, we obtain $b \in A \setminus F \subset A \setminus E'$. 
Letting $H_{E'}=\langle A \setminus E' - A \setminus E' \rangle$, we have $H \subset H_{E'}$, that is $H+ b \subset H_{E'}+b$.
By Corollary \ref{cor:ess}, $G$ is generated by $H_{E'} \cup \{x-b\}$.
Yet $x-b \in H_{E'}$ which yields $G=H_{E'}$, a contradiction.
 
By Lemma \ref{lem:index}, $(h-1)(F\cup\{b\})+H=G$.
Let $A'= F\cup (H+b) \subset G$. 
Then $A'$ is a basis of $G$ of order at most $h$. 
Also, $\langle A' \setminus F - A' \setminus F \rangle=H$ is a subgroup of finite index of $G$ so that Lemma \ref{lem:twobases} applies to the partition $A'=F \cup (H+b)$.
Finally, the condition (2a) of Lemma \ref{lem:twobases} is trivially satisfied in this case.

Now let $E \subset F$ be any subset.
We know that $B \subset H+b$ and $E\cap (H+b) =\emptyset$. 
Since $H$ is a subgroup of finite index of $G$, it follows that $A \setminus E= (F\setminus E) \cup B$ and $A'\setminus E= (F\setminus E) \cup (H+b)$ are two partitions to which Lemma \ref{lem:twobases} applies. 
Note also that the condition (2a) of that lemma has already been proved to hold in both cases.
This gives
\begin{equation*}
 A \setminus E \textup{ is a $G$-basis of } T \iff \langle F\setminus E - b +H \rangle = G
 \iff A' \setminus E \textup{ is a basis of } G.
\end{equation*}
Consequently, each essential subset of $A$ (all of which are subsets of $F$) is an essential subset of $A'$.
Now $A'$ has at most $E_G(h,k)$ essential subsets of cardinality $k$ by definition, whence $E_T(h,k)\le E_G(h,k)$.

To prove that $E_G\le E_T$, we argue similarly; thus let $A$ be a basis of $G$ of order at most $h$ and let $F$ be the union of its essential subsets. 
From now on, and since the desired inequality readily holds true otherwise, we assume that $F$ is nonempty.
Using Lemma \ref{lem:translatable} part (3), by translating $A$ by some $t \in T$, and since translations preserve bases and the number of essential subsets, we may assume that $F \subset T$. 
By Lemma \ref{lem:eg}, the subgroup $H=\langle A\setminus F-A\setminus F\rangle$ of $G$ is proper and of finite index, and $A=F\cup B$ where $B =  A\setminus F \subset x+H$ for some $x\in G$. 
We may assume $x\in T$ by Lemma \ref{lem:translatable} part (4). 
We have again $(h-1)(F\cup\{x\})+H=G$ by Lemma \ref{lem:index}.
Let $A'=F\cup (x+T\cap H)\subset T$. Then $hA' \supset (h-1)(F\cup\{x\}) + T \cap H \sim T$ by Lemma \ref{lem:translatable} part (6). 
Using Lemma \ref{lem:twobases} in the same way as before, we see that if $E \subset F$ then
\begin{equation*}
 A \setminus E \textup{ is a basis of } G \iff \langle F\setminus E - x +H \rangle = G
 \iff A' \setminus E \textup{ is a basis of } T.
\end{equation*}
This shows that all essential subsets of $A$ are essential subsets of $A'$, so $A$ has at most $E_T(h,k)$ essential subsets of size $k$, and finally $E_G(h,k)\le E_T(h,k)$. This concludes the proof.
\end{proof}
\subsection{Discussion of finite quotients and lower bounds}
\label{sec:lowbounds}
In this section we exhibit a connection between the function $E_G(h,k)$ and finite quotients of $G$, which we use to prove parts $(ii)$ of Theorem \ref{th:ElowBounds}
and $(iii)$ of Theorem \ref{th:e2}.

Let $G$ be a (possibly finite) abelian group.
We say that $A\subset G$ is a \textit{nice basis} of $G$ if $hA=G$  and a \textit{nice weak basis} if $\bigcup_{i=0}^hiA= G$ for
some $h\in\N$. Also if $G$ is finite, note that $A\subset G$ is a nice basis if and only if $\langle A-A\rangle=G$, and a nice weak basis if and only if $\langle A\rangle =G$.

A finite subset $F$ of a nice basis $A$ is said to be \textit{nicely exceptional} if $A\setminus F$ is no longer a nice basis, and \textit{nicely essential} if it is minimal for this property.
Similarly, a  finite subset $F$ of a nice weak basis $A$ is called nice-weakly exceptional if
$A\setminus F$ is no longer a nice weak basis, and \textit{nice-weakly essential} if it is additionally minimal for this property.

We define $E_G^*(h,k)$ (resp. $E_G^*(h,\le k)$) as the maximal number of nice-weakly essential sets of cardinality $k$ (resp. at most $k$) a nice weak basis $A$ of order at most $h$ of $G$ may have. 
\begin{proposition}
\label{prop:Equotients}
Let $G$ be an infinite abelian group and $h\ge 2, k\ge 1$ integers.
Then $E_G(h,k)\ge \displaystyle \max_{[G:H]<\infty}E_{G/H}^*(h-1,k)$ and
$E_G(h,\le k)= \displaystyle \max_{[G:H]<\infty}E_{G/H}^*(h-1,\le k)$.

\end{proposition}
Note that by Theorem \ref{th:ElowBounds}$(i)$, we already know that if $G$ does not have subgroups of finite index, then $E_G(h, k) = E_G(h, \le k) =0$.
\begin{proof}
Let $H$ be a finite index subgroup of $G$.
Let $A\subset G/H$ be a nice-weak basis of order at most $h-1$ which has $E_{G/H}^*(h-1,k)$
essential subsets of cardinality $k$. We may suppose that $0\notin A$.
Let $\tilde{A}\subset G$ be a set of representatives of $A$; in particular $\tilde{A}\cap H=\emptyset$.
Let $B=\tilde{A}\cup H$. It is a basis of order  at most $h$ of $G$.
For any subset $F\subset A$ of cardinality $k$, let 
$\tilde{F}\subset \tilde{A}$ be the set of representatives of the elements of $F$ inside $\tilde{A}$.
Applying Lemma \ref{lem:twobases} (with the roles of $B, F, A$ in that lemma played by $H, \tilde{A} \setminus \tilde{F}, (\tilde{A} \setminus \tilde{F}) \cup H$ respectively),
we see that $F$ is nice-weakly exceptional in $A$ $\Leftrightarrow$ $\langle A \setminus F \rangle \neq G/H$ $\Leftrightarrow$ $\tilde{F}$ is exceptional in $B$.
In particular $F$ is nice-weakly essential in $A$ if and only if $\tilde{F}$ is essential in $B$.
Thus $E_G(h,k)\ge E_{G/H}^*(h-1,k)$. The proof of $E_G(h,\le k)\ge E_{G/H}^*(h-1,\le k)$ runs along the same lines.

We will now prove that $E_G(h,\le k)\le E_{G/H}^*(h-1,\le k)$ for some subgroup $H$ of finite index of $G$. Suppose $A\subset G$ is a basis of order at most $h$ which has $E_G(h,\le k)$ essential subsets of cardinality at most $k$. Let $F$ be the union of these essential subsets and 
$H=\langle A\setminus F-A\setminus F\rangle$, a subgroup of finite index by Lemma \ref{lem:index}.
Thus $A\subset F\cup (x+H)$ for some $x\in G$.
Upon translating, we may assume that $x=0$. For any subset $X \subset A$ let $\overline{X} \subset G/H$ denote the projection of $X$ on $H$.
Then the projection $\overline{A} = \overline{F}\subset G/H$ is a nice weak basis of order at most $h-1$ of $G/H$. 
Let $E$ be any essential subset of $A$.

\medskip
\noindent \textbf{Claim 1.} $\overline{A\setminus E} \cap \overline{E} = \emptyset$. Equivalently, $(A \setminus E) \cap (E+H) = \emptyset$.

\begin{proof}[Proof of Claim 1.]
We have $A\subset E\cup H_E$, where $H_E=\langle A\setminus E-A\setminus E\rangle $ is a proper 
subgroup of $G$ containing $H$. Thus $E\supset A\setminus H_E$ and by minimality $E=A\setminus H_E$. Consequently, $E + H_E = (A\setminus H_E) + H_E \subset G\setminus H_E$. Thus
\[
(A \setminus E) \cap (E+H) \subset (A \setminus E) \cap (E+H_E) \subset H_E \cap (G \setminus H_E) = \emptyset
\]
and Claim 1 is proved.
\end{proof}

\noindent \textbf{Claim 2.} $\overline{E}$ is a nice-weakly essential subset of $\overline{A}$.

\begin{proof}[Proof of Claim 2.]
Observe that by Claim 1, $\overline{A}\setminus\overline{E} = \overline{A\setminus E}\subset H_E/H$
so $\langle \overline{A}\setminus\overline{E} \rangle \neq G/H$ and $\overline{E}$ is a nice-weakly exceptional subset of $\overline{A}$. Let us now show that for any $x \in E$, the subset
$\overline{E} \setminus \{ \overline{x} \}$ of $\overline{A}$ is not nice-weakly exceptional, i.e. $\langle (\overline{A} \setminus \overline{E})\cup \{\overline{x}\}\rangle = G/H$. But this follows from the facts that
$\langle \overline{A\setminus E}\rangle=H_E/H$, and the projection of $x$ on $H_E$ generates $G/H_E$ by Corollary \ref{cor:ess}. Thus Claim 2 is proved.
\end{proof}

Further, if $E_1\neq E_2$ are two distinct essential subsets of $A$, then $\overline{E_1}$ and $\overline{E_2}$
are distinct since $A\cap (E_1+H)=E_1$ and $A\cap (E_2+H)=E_2$ by Claim 1. Since the cardinality of $\overline{E}$ does not exceed that of $E$, Claim 2 implies that $E_G(h,\le k)\le E_{G/H}^*(h-1,\le k)$, and we are done.
\end{proof}

\begin{remark}
We note two points.
\begin{itemize}
\item Note that the cardinality of an essential subset may decrease upon projection. 
This is why the equality statement in Proposition \ref{prop:Equotients} applies to the function $E_G(h,\le k)$
 rather than $E_G(h,k)$.
\item This proposition reveals that for each $h$ and $k$, the
invariant $E_G(h,\le k)$ of $G$ is entirely determined by the set of finite quotients of $G$; thus for instance
$E_{\F_2[t]}(h,\le k)=E_{\F_2^\N}(h,\le k)$.
\end{itemize}
\end{remark}
We now exhibit a simple way of bounding $E_{G/H}^*(h-1,k)$ from below.
Let $M(G,k)$ (resp. $M(G,\le k)$) be the number of maximal subgroups of cocardinality $k$ (resp. at most $k$) of a finite group $G$.
\begin{proposition}
\label{prop:decomposition}
Let $G$ be a finite abelian group which admits a decomposition $G=\bigoplus_{i=1}^hG_i$ as a direct sum of $h\ge 1$ subgroups.
Then  $E_G^*(h,k)\ge \sum_{i=1}^hM(G_i,k)$ and $E_G^*(h,\le k)\ge \sum_{i=1}^hM(G_i,\le k)$.
Both inequalities are equalities when $h=1$.
\end{proposition}
It would be interesting to know whether equality always holds for some decomposition 
$G=\bigoplus_{i=1}^hG_i$ of $G$.
\begin{proof}
The hypothesis implies that $A=\bigcup_{i=1}^h G_i$ is a basis of order $h$.
We claim that its nice-weakly essential subsets are precisely the sets of the form $G_i\setminus K$ where $i\in[h]$ and
$K$ is a maximal proper subgroup of $G_i$.
First, let $i\in[h]$ and
$K$  a  proper subgroup of $G_i$. Let $F=G_i\setminus K$. Then $\langle A\setminus F\rangle=\bigoplus_{j\neq i}G_j \oplus K\neq G$ so $F$ is nice-weakly exceptional. If $F$ is additionally maximal, then $F$ is
essential.
Conversely, let $F\subset A$ be nice-weakly exceptional, 
and let $F_i=F\cap G_i$ so $F=\bigcup_{i=1}^h F_i$. Then $A\setminus F=\bigcup_{i=1}^h G_i\setminus F_i$
and $\langle A\setminus F\rangle=\bigoplus_{i=1}^h \langle G_i\setminus F_i\rangle$.
Therefore at least one $i\in [h]$ must satisfy $\langle G_i\setminus F_i\rangle\neq G_i$.
Now  suppose $F$ is nice-weakly essential, so by minimality exactly one $i\in [h]$ must satisfy
$\langle G_i\setminus F_i\rangle\neq G_i$, and finally
 $F=G_i\setminus K$ for some $i$ and some maximal subgroup $K$ of $G_i$. 
Therefore $A$ has  $\sum_{i=1}^hM(G_i, k)$ essential subsets of cardinality $k$ and $\sum_{i=1}^hM(G_i,\le k)$ essential subsets of cardinality at most $k$.
 This proves both inequalities.
 
 Since a nice weak basis of order 1 of $G$ is precisely $G$ or $G\setminus \{0\}$, we obtain the equality when $h=1$.
\end{proof}
 
 The two propositions above imply that $E_G(2,\le k)=\displaystyle \max_{[G:H]<\infty}M(G/H,\le k)$.
 So there remains to understand $M(G,\le k)$ for finite abelian groups.
 \begin{proposition}
 \label{prop:numberMaxSubgroups}
 For any finite abelian group and integer $k\ge 1$, we have
 $M(G,\le k)\le 2k-1$ and equality holds if, and only if, $G=\F_2^d$ and $k=2^{d-1}$ for some $d\ge 1$, in which case we even have $M(G,k)=2k-1$.
 \end{proposition} 
\begin{proof}
The basic theory of finite abelian groups indicates that 
a subgroup is maximal if, and only if, it has cardinality $\abs{G}/p$ for some prime $p$ dividing $ \abs{G}$,
thus $\abs{G}/p\ge \abs{G}-k$. In particular $M(G,k)=0$ unless $\abs{G}\le 2k$, which we henceforth
suppose. Further, the number of subgroups of index $p$ equals the number of subgroups of cardinality $p$. For each prime $p\le \frac{\abs{G}}{\abs{G}-k}$, let $G_p=\{x \in G : px=0\}$; the number of subgroups of order $p$ of $G$ is $\frac{\abs{G_p}-1}{p-1}$.
Therefore,
$$
M(G,\le k)=\sum_{p\le \frac{\abs{G}}{\abs{G}-k}}\frac{\abs{G_p}-1}{p-1}\le \sum_{p\le \frac{\abs{G}}{\abs{G}-k}} (\abs{G_p}-1).
$$
Now we invoke the simple inequality $\sum_{i=1}^n (a_i-1)\le \prod_{i=1}^na_i-1$, valid for any $n$-tuple of real numbers satisfying $a_i\ge 1$, where equality holds if and only if $a_i=1$ for all but at most one $i\in [n]$.
Finally we apply the fact that $\prod_p\abs{G_p}\le\abs{G}$,
which follows from the fact that the subgroups $G_p$ of $G$ are in direct sum,
the already derived condition 
$\abs{G}\le 2k$, and conclude that $M(G,\le k)\le 2k-1$. The equality case follows readily from the conjunction of all four inequalities used in the proof.
\end{proof}
The proof also reveals that if $G$ has no nontrivial element of order less than $p$, we have
$M(G,\le k)\le kp/(p-1)-1$ where equality holds if, and only if, $\abs{G}=\F_p^d$ and $k=p^d-p^{d-1}$.

We now prove Theorem \ref{th:ElowBounds} $(ii)$ and Theorem \ref{th:e2} $(iii)$.

\begin{proof}[Proof of Theorem \ref{th:ElowBounds} (ii) and Theorem \ref{th:e2} (iii)]
For Theorem \ref{th:ElowBounds} $(ii)$, we consider $k=2^{r-1}$ for some $r \ge 1$. By hypothesis, there is a subgroup $H$ of $G$ such that $G/H \cong \F_2^{(h-1)r}$. By Proposition \ref{prop:Equotients}, 
$E_G(h,k) \ge E^*_{G/H}(h-1,k)$. We write $\F_2^{(h-1)r}=\bigoplus_{i=1}^{h-1} G_i$ where 
$G_i\cong \F_2^r$. By Propositions \ref{prop:decomposition} and \ref{prop:numberMaxSubgroups}, we have $E^*_{G/H}(h-1,k) \ge (h-1) M(\F_2^r, 2^{r-1}) = (h-1)(2k-1)$.

For Theorem \ref{th:e2} $(iii)$, Propositions \ref{prop:Equotients} and \ref{prop:decomposition} imply that
\[
E(2, \le k) = \displaystyle \max_{[G:H]<\infty} E^*_{G/H} (1, \le k) = \max_{[G:H]<\infty} M(G/H, \le k).
\]
The last quantity is $\le 2k-1$ by Proposition \ref{prop:numberMaxSubgroups}.
\end{proof}

\section{The function $X_T(h,k)$} \label{sec:x} 
\subsection{Upper bounds}
\label{sec:Xupper}
We fix a translatable semigroup $T$ of Grothendieck group $G=G_T$ and an invariant mean $\Lambda$ on $T$. 
By Lemma \ref{lem:extension}, we extend it to an invariant mean on $G$ by letting $\Lambda(f)=\Lambda(f |_T)$ for any $f\in \ell^\infty(G)$, where $f |_T$ is the restriction of $f$ to $T$.
For a set $A \subset G$, we refer to $d(A)=\Lambda(1_A)$ as the ``density'' of $A$. 
Note that $d(T)=1$.
We first prove some lemmas on the densities of certain sumsets.
\begin{lemma} \label{lm:densityIncrement}
Let $B,C \subset G$. 
Then either $d(B+C)\ge 2d(C)$ or $B-B\subset C-C$.
\end{lemma}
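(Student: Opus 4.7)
The plan is to prove the contrapositive: assume $B - B \not\subset C - C$ and derive $d(B+C) \ge 2d(C)$. The argument hinges on extracting two disjoint translates of $C$ inside $B + C$ and then invoking finite additivity together with translation invariance.

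First I would pick an element $x \in (B - B) \setminus (C - C)$. Writing $x = b_1 - b_2$ with $b_1, b_2 \in B$, the condition $x \notin C - C$ is exactly the statement that $(x + C) \cap C = \emptyset$, which rearranges to $(b_1 + C) \cap (b_2 + C) = \emptyset$. Both of these sets are contained in $B + C$ because $b_1, b_2 \in B$.

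Then I would apply the three properties of $d$. By translation invariance (D2), $d(b_1 + C) = d(b_2 + C) = d(C)$. By finite additivity (D1) applied to the disjoint pair $\{b_1 + C, b_2 + C\}$,
\[
d\bigl((b_1 + C) \cup (b_2 + C)\bigr) = 2\, d(C).
\]
Finally, finite additivity combined with nonnegativity yields monotonicity of $d$, so from $(b_1 + C) \cup (b_2 + C) \subset B + C$ we conclude $d(B + C) \ge 2\, d(C)$, as required.

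There is essentially no obstacle here: the only subtlety is to notice that $B - B \not\subset C - C$ is precisely a disjointness statement about two translates of $C$ by elements of $B$, after which the axioms (D1)--(D3) deliver the density bound immediately. The lemma is really just the ``prehistorical lemma'' (Lemma \ref{lem:prehistoric}) read in contrapositive form, applied to the sets $b_1 + C$ and $b_2 + C$ rather than to $A$ and $B$.
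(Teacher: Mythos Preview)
Your proof is correct and follows essentially the same approach as the paper: both arguments hinge on observing that an element of $(B-B)\setminus(C-C)$ yields two disjoint translates $b_1+C$ and $b_2+C$ inside $B+C$, and then invoking translation invariance and finite additivity of $d$. The paper phrases this as a direct case split rather than a contrapositive, but the content is identical.
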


\begin{proof}
Suppose there are two distinct elements $b,b'$ of $B$ such that $b+C$ and $b'+C$ are disjoint.
Then $d(B+C)\ge d((b+C)\cup (b'+C))=2d(C)$.
Otherwise, for any $b\neq b'$ of $B$ we have $(b+C)\cap (b'+C)\neq \emptyset$, which implies that $b-b'\in C-C$, so that $B-B\subset C-C$.
\end{proof}
We shall deduce by iteration the following corollary.
\begin{corollary}
\label{cor:iter}
Let $A \subset G$. Let $r\ge 1$ be an integer.
For any $i\ge 0$, let $s_i=2^{i} r+ 2^{i} -1$.
Then either $d(s_iA)\ge 2^id(rA)$ or $i\ge 1$ and $s_{i-1}(A-A)=\langle A-A\rangle$.
\end{corollary}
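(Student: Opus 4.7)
My plan is to proceed by induction on $i \ge 0$. The base case $i=0$ is immediate since $s_0 = r$, giving $d(s_0 A) = d(rA) \ge 2^0 d(rA)$, i.e.\ the first alternative. For the induction step the key arithmetic observation is that $s_{i+1} = 2^{i+1}r + 2^{i+1} - 1 = 2s_i + 1 = s_i + (s_i+1)$, so that
\[
s_{i+1} A \;=\; s_i A + (s_i+1)A
\qquad\text{and}\qquad n A - nA = n(A-A).
\]

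Assume the statement holds at stage $i$. If we are in the second alternative there, i.e.\ $s_{i-1}(A-A) = \langle A-A\rangle$, then since $0 \in A - A$ the sequence $n\mapsto n(A-A)$ is nondecreasing, so $s_i(A-A) \supseteq s_{i-1}(A-A) = \langle A-A\rangle$, and $s_i(A-A) = \langle A-A\rangle$; this is the second alternative at stage $i+1$. If instead we are in the first alternative at stage $i$, so $d(s_i A) \ge 2^i d(rA)$, I would apply Lemma \ref{lm:densityIncrement} with $B = (s_i+1)A$ and $C = s_i A$. Then $B+C = s_{i+1}A$, $C-C = s_i(A-A)$, and $B-B = (s_i+1)(A-A)$, and the dichotomy yields either
\[
d(s_{i+1}A) \;\ge\; 2 d(s_i A) \;\ge\; 2^{i+1} d(rA),
\]
which is the first alternative at $i+1$, or the inclusion $(s_i+1)(A-A) \subseteq s_i(A-A)$.

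In the latter subcase, combining with the reverse containment $s_i(A-A) \subseteq (s_i+1)(A-A)$ (again from $0\in A-A$), one gets equality $(s_i+1)(A-A) = s_i(A-A)$; adding $(A-A)$ to both sides and iterating shows $n(A-A) = s_i(A-A)$ for every $n\ge s_i$, hence $s_i(A-A) = \bigcup_{n\ge 1} n(A-A) = \langle A-A\rangle$, which is the second alternative at $i+1$ (note $i+1\ge 1$).

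There is no serious obstacle here: once the identity $s_{i+1} = s_i + (s_i+1)$ is noticed, the only step that really uses anything is the invocation of Lemma \ref{lm:densityIncrement}, and the slight subtlety is the asymmetric choice $B=(s_i+1)A$, $C=s_iA$ (rather than the reverse), so that the second alternative of the lemma lands on exactly the containment needed to force $s_i(A-A)$ to already be a subgroup rather than merely absorbing some multiple of $A-A$.
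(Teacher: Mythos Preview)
Your proof is correct and follows essentially the same approach as the paper: induction on $i$, applying Lemma~\ref{lm:densityIncrement} with $B=(s_i+1)A$ and $C=s_iA$, and using the identity $s_{i+1}=s_i+(s_i+1)$ together with $nA-nA=n(A-A)$ to interpret the two alternatives. Your organization is in fact a little cleaner than the paper's, since you branch directly on which alternative of the induction hypothesis holds at stage $i$, whereas the paper first applies the lemma and then, in the density-doubling case, argues retroactively that the second alternative must have failed at every level $j\le i$ before invoking the induction hypothesis.
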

\begin{proof}
We argue by induction. For $i=0$ the claim is trivial.

Fix some $i\ge 0$ and let us show that either $d(s_{i+1}A)\ge 2^{i+1}d(rA)$ or $s_{i}(A-A)=\langle A-A\rangle$.
We apply Lemma \ref{lm:densityIncrement}, to $C=s_iA$ and $B=(s_i+1)A$.
Then $B+C=s_{i+1}A$. If $B-B\subset C-C$, we have for any $s\ge s_i$ the inclusion $s(A-A)\subset s_i(A-A)$.
Since $\langle A- A \rangle = \bigcup_{j=1}^\infty j(A-A)$, this implies that $s_i(A-A)=\langle A-A\rangle$.
 
Otherwise, we must have $d(s_{i+1}A)=d(B+C)\ge 2d(s_iA)$.
Further, note that $s_i(A-A)\neq \langle A-A\rangle$, and therefore for any $s\le s_i$ we know that $s(A-A)\neq \langle A-A\rangle$.
If $i=0$ we are done. Otherwise, applying the induction hypothesis, we see that $d(s_j)\ge 2d(s_{j-1}A)$ for any $j\le i$.
By a straightforward induction, we conclude that $d(s_{i+1}A)\ge 2^id(rA)$.
\end{proof}

We now show that if $d(hA)>0$, then $A-A$ must be a basis of bounded order of the group it generates.
\begin{lemma}
\label{lm:densityIncrement2}
Suppose $A \subset G$, $h \ge 1$ and $d(hA) = \alpha>0$. 
Then there exists $s \le \frac{1}{\alpha}(h+1)-1$ such that $sA-sA=\langle A-A\rangle $.
\end{lemma}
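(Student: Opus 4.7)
The plan is to apply Corollary \ref{cor:iter} with $r=h$ and extract a contradiction to the first alternative as soon as the exponential growth in $i$ outruns the bound $d \le 1$ provided by axiom (D3).

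More concretely, set $r=h$ and consider the sequence $s_i = 2^i(h+1) - 1$ from Corollary \ref{cor:iter}. For each $i \ge 0$, either
\[
d(s_i A) \ge 2^i d(hA) = 2^i \alpha, \qquad \text{or} \qquad i \ge 1 \text{ and } s_{i-1}(A-A)=\langle A-A\rangle.
\]
Since $d(s_i A) \le d(G) = 1$, the first alternative forces $2^i \alpha \le 1$. Hence as soon as $2^i > 1/\alpha$, the second alternative must hold.

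I would then take $i_0$ to be the least integer with $2^{i_0} \alpha > 1$. Because $\alpha \le 1$ (as $d$ is a probability measure and $hA \subset G$), we have $i_0 \ge 1$ and, by minimality of $i_0$, also $2^{i_0 - 1} \le 1/\alpha$. Applying the above dichotomy at index $i_0$, we conclude that $s_{i_0 - 1}(A-A) = \langle A - A \rangle$, and the bound
\[
s_{i_0-1} = 2^{i_0-1}(h+1) - 1 \le \frac{h+1}{\alpha} - 1
\]
yields the desired $s$.

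There is no serious obstacle here: the heavy lifting is packaged in Corollary \ref{cor:iter}, which already encodes the iterated doubling argument via Lemma \ref{lm:densityIncrement}. The only mildly delicate step is the careful choice of $i_0$ so that the exponent $2^{i_0-1}$ landing in the bound for $s_{i_0-1}$ is at most $1/\alpha$, which is guaranteed by taking $i_0$ minimal rather than any larger index.
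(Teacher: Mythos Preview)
Your proof is correct and follows essentially the same approach as the paper: apply Corollary~\ref{cor:iter} with $r=h$, take $i_0$ minimal with $2^{i_0}\alpha>1$, use the density bound $d\le 1$ to rule out the first alternative, and then bound $s_{i_0-1}=2^{i_0-1}(h+1)-1\le \frac{1}{\alpha}(h+1)-1$ via $2^{i_0-1}\le 1/\alpha$. Your explicit verification that $i_0\ge 1$ (from $\alpha\le 1$) is a small clarification the paper leaves implicit.
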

\begin{proof}
We apply Corollary \ref{cor:iter} to the set $hA$, the integer $r=h$ and $i=i_0$ the smallest integer such that $2^{i_0}\alpha >1$.
Since the density cannot exceed 1, we have $s_i(A-A) = \langle A-A \rangle$ where $s_i = 2^{i_0-1} h+ 2^{i_0-1} -1 \le \frac{1}{\alpha}(h+1)-1$ (since $\frac{1}{\alpha} \ge 2^{i_0-1}$). 
This yields the desired conclusion.
\end{proof}

The following lemma can be regarded as an analogue of \cite[Lemma 3]{nn}.
\begin{lemma}
\label{lm:NathNashLike}
Let $B\subset G$ satisfy $\langle B-B\rangle=G$.
Suppose there exist $h, m \ge 1$ and $x_1,\ldots,x_m$ in $T$ such that $T \subeq \bigcup_{i=1}^m (x_i+hB)$.
Then $B$ is a $G$-basis of $T$ of order at most $ h+ m^2 (h+1) -m$.
\end{lemma}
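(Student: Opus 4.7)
The plan is to combine the invariant-mean machinery of Section \ref{sec:invariant} with Lemma \ref{lm:densityIncrement2} to reduce the multi-translate cover $T \subeq \bigcup_{i=1}^m(x_i + hB)$ to a single high-order cover $T \subeq h'B$.

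First I would extract a density lower bound from the covering hypothesis. Fix an invariant mean on $T$, extend it to $G$ via Lemma \ref{lem:extension}, and let $d$ be the induced translation-invariant, finitely additive density on $G$. Then
\[
1 \;=\; d(T) \;\le\; \sum_{i=1}^m d(x_i + hB) \;=\; m\,d(hB),
\]
so $d(hB) \ge 1/m$. Feeding $\alpha = 1/m$ into Lemma \ref{lm:densityIncrement2} produces an integer $s \le m(h+1)-1$ with $sB - sB = \langle B-B\rangle = G$.

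Next I would use this equality to absorb each $x_i$ into a bounded multiple of $B$. Since every $x_i$ lies in $G = sB - sB$, write $x_i = p_i - q_i$ with $p_i, q_i \in sB$, and set $q_0 := q_1 + \cdots + q_m \in msB$. For all but finitely many $t \in T$ there is some index $i(t)$ with $t \in x_{i(t)} + hB$, and the resulting computation
\[
t + q_0 \;=\; p_{i(t)} + (t - x_{i(t)}) + \sum_{j \neq i(t)} q_j \;\in\; sB + hB + (m-1)sB \;=\; (h+ms)B
\]
shows $T + q_0 \subeq (h+ms)B$. Lemma \ref{lem:translatable}(1) (valid for any $q_0 \in G$) gives $T \sim T + q_0$, so $T \subeq (h+ms)B$; and $h + ms \le h + m(m(h+1)-1) = h + m^2(h+1) - m$ is exactly the advertised bound.

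The only real subtlety is choosing a single $t$-independent translation $q_0$ that works uniformly for all values of the index $i(t)$: taking $q_0$ to be the \emph{full} sum $q_1 + \cdots + q_m$ rather than the $t$-dependent $q_{i(t)}$ costs an extra $(m-1)s$ summands from $B$, and this is precisely what produces the $m^2$ factor in the bound. Otherwise the argument is a routine bookkeeping exercise once the density estimate and Lemma \ref{lm:densityIncrement2} are in place.
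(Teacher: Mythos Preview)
Your proposal is correct and follows essentially the same argument as the paper's proof: both obtain $d(hB)\ge 1/m$ from the covering hypothesis, invoke Lemma~\ref{lm:densityIncrement2} to find $s\le m(h+1)-1$ with $sB-sB=G$, write each $x_i$ as a difference of elements of $sB$, and then translate by the sum of the subtracted parts to land in $(h+ms)B$. The only differences are cosmetic (your element-wise computation versus the paper's set-level inclusions, and your explicit appeal to Lemma~\ref{lem:translatable}(1) for the final translation step).
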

The term $m^2$ (whereas one could hope for $m$ instead) is what is ultimately causing our bounds for $X_G(h,k)$ to be large; we do not know whether it is optimal.
\begin{proof}
The hypothesis and axioms of a density imply $d(hB)\ge 1/m$.
By Lemma \ref{lm:densityIncrement2}, we infer that there exists $s \le m(h+1)-1$ such that $sB-sB=\langle B-B\rangle=G$.
Thus, for each $1 \le i \le m$ we may write $x_i=a_i-b_i$ where $a_i\in sB$ and $b_i\in sB$. 
Hence
\[
T\subeq \bigcup_{i=1}^m (hB+a_i-b_i).
\]
By adding $\sum_{i=1}^m b_i$ to both sides and using translatability, we have
\[
T \subeq \bigcup_{i=1}^m  (hB+a_i+\sum_{j\neq i}b_j)
\]
which shows that all except finitely many elements of $T$ can be expressed as a sum of $h + ms$ elements of $B$. Since $h+ms \le h+ m^2 (h+1) -m$, we are done.
\end{proof}

We may now deal with the effect of removing a regular subset from a basis.

\begin{proof}[Proof of Theorem \ref{th:x2}]
Let $A$ be a $G$-basis of order at most $h$ and $F \subset A$ be a regular subset of cardinality $k$. 
Let $B=A \setminus F$. 
Since $F$ is regular, by Lemma \ref{lem:eg}, we have $\langle B-B \rangle =G$. 
We observe that
\begin{align}
\label{eq:decompo}
T & \subeq hB \cup ((h-1)B+F)\cup \cdots \cup (B+(h-1)F). 
\end{align}
Let $b\in B$.
Since $iB\subset hB-(h-i)b$, we have $iB+hb\subset hB+ib$ and by translatability
$$T \subeq (hB+hb) \cup (hB+F+(h-1)b)\cup \cdots \cup (hB+(h-1)F+b).$$

Therefore we may apply Lemma \ref{lm:NathNashLike} with
\begin{equation}
\label{eq:boundCard}
m = \abs{\displaystyle\bigcup_{j=1}^{h-1} (jF+(h-j)b)} \le \abs{\left\{ (t_1, \ldots, t_k) \in \N^k : \sum_{i=1}^k t_i \le h-1 \right\}} =\binom{h+k-1}{k}.
\end{equation}
We infer that $B$ is a $G$-basis of order at most $$(h+1) \binom{h+k-1}{k} ^2 - \binom{h+k-1}{k} +h = \frac{h^{2k+1}}{k !^2} (1 + o_k(1)),$$ 
which is the desired result.
\end{proof}

\begin{remark}
 In the case $k=1$, the proof gives the bound $X_T(h) \le (h+1)h^2$. By bounding $d(iB)$ in terms of $d(hB)$ using Corollary \ref{cor:iter} for each $i \le h$, one can prove a better bound $X_T(h) \le \left( \frac{2}{3} + o(1) \right) h^3$. 
\end{remark}

\begin{remark}
For fixed $k$ and $h \rightarrow \infty$, we do not know if the estimate $\ord^*_T(B) = O_k(h^{2k+1})$ is best possible. On the other hand, we see that $\ord^*_G(B-B) 
\le (h+1) \binom{h+k-1}{k}-1 = O_k(h^{k+1})$ (by Lemma \ref{lm:densityIncrement2} and the fact $d(B)\ge 1/\binom{h+k-1}{k}$). This estimate is best possible in terms of $h$, as shown by the following example. Let $A = \{0, 1, b, \ldots, b^k\} \cup b^{k+1} \N$. Then $A$ is a basis of order $h=(b-1)(k+1)$ of $\N$. Let $F=\{ b, \ldots, b^k \}$, then $B = A \setminus F = \{ 0, 1\} \cup b^{k+1} \N$ 
and $B-B = \{ 0, \pm 1\} \cup b^{k+1} \Z$. Then $B-B$ is a basis of $\Z$ of order $\le \frac{b^{k+1}}{2} = O_k(h^{k+1})$. We will use this idea again to prove Theorem \ref{th:XlowBounds}.
\end{remark}

In the case of $\sigma$-finite groups, we can do better.
\begin{proof}[Proof of Theorem \ref{th:sigmaFinite}]
Let $T$ be a $\sigma$-finite infinite abelian group.
Let $(G_n)_{n\ge 0}$ be a nondecreasing sequence of subgroups such that $T=\bigcup_{n\ge 0} G_n$.
For $C\subset T$, let $\undd{C}=\limsup_{n\to\infty}\frac{|C\cap G_n|}{|G_n|}$ be its upper asymptotic density.
Let $A$ be a basis of $G$ of order at most $h\ge 2$.
Let $F$ be a regular subset of $A$ of cardinality $k$ and $B=A\setminus F$.
Upon translating we may assume that $0\in B$. Note that $\langle B\rangle = \langle B-B\rangle =T$ by Lemma \ref{lem:eg}.
By equation \eqref{eq:decompo}, we have $\undd{hB+\bigcup_{j=0}^{h-1}jF}=\undd{T}=1$.
Note that for any two subsets $X,Y$ of $T$, for any $\epsilon >0$, we have
$$
\frac{|(X\cup Y)\cap G_n|}{|G_n|}\le \frac{|X\cap G_n|+|Y\cap G_n|}{|G_n|}\le \undd{X}+\undd{Y}+\epsilon.
$$
Taking the upper limit, we find that
$\undd{X\cup Y}\le \undd{X}+\undd{Y}+\epsilon$.
Finally, letting $\epsilon$ tend to 0, we see that $\undd{X\cup Y}\le \undd{X}+\undd{Y}$.

Because of the translation-invariance of the density, the just obtained inequality and equation \eqref{eq:boundCard}, we infer that $\undd{hB}\binom{h+k-1}{k}\ge 1$.
We are now in position to apply \cite[Theorem 1]{hr}, which yields that $hB$ is a basis of $\langle hB \rangle=G$ of order at most $1+2/\undd{hB}\le 1+2\binom{h+k-1}{k}=\frac{h^k}{k!}+O(h^{k-1})$.
Therefore, $B$ itself is a basis of order at most $h\,\ord_G^*(hB)\le 2\frac{h^{k+1}}{k!}+O(h^k)$.
\end{proof}

\begin{remark}
Instead of appealing to \cite[Theorem 1]{hr}, we could have used Kneser's theorem for the lower asymptotic density \cite{BH} and the fact that any set $A$ of lower asymptotic density larger than 1/2 satisfies $A+A\sim G$ and argued like Nash and Nathanson in the integers.

Note that a Kneser-type theorem is available in any countable abelian group $G$ for the upper \textit{Banach} density \cite{Griesmer}.
However, that density has the drawback that a set $A\subset G$ satisfying $\band{A}>1/2$, even $\band{A}=1$, may not be a basis of any order of the group it generates.
For instance, take $B=\bigcup_{i\ge 1} [2^i,2^i+i)\subset \Z$ and $A=B\cup (-B)$; it generates $\Z$ but is far too sparse to be a basis of $\Z$, of any order. 
Yet its upper Banach density is 1.
\end{remark}
We conclude the section with the case of infinite abelian groups of finite exponent.

\begin{proof}[Proof of Theorem \ref{th:x3}]
Let $G$ be an infinite abelian group of exponent $\ell$.
For part (1), we proceed identically to the proof of Theorem \ref{th:x2} with the group $G$ in the place of $T$. 
The difference is that, since $G$ has exponent $\ell$,
$$m = \abs{\displaystyle\bigcup_{j=1}^{h-1} jF} \le \abs{\left\{ (t_1, \ldots, t_k) \in \N^k : t_i \le \ell-1, \sum_{i=1}^k t_i \le h-1 \right\}} \le \ell^k.$$
Thus by Lemma \ref{lm:NathNashLike}, $B$ is a basis of order at most $ (h+1) \ell^{2k} - \ell^k +h$
as desired.

As for part (2), we will generalize the argument in \cite[Theorem 5]{llp}. 
Suppose $F=\{a\}$. 
By translating $A$ by $-a$ if necessary, we may assume that $a=0$. 
Since $G$ has exponent $\ell$, we have $sB \subset (s+\ell)B$ for any $s$. Therefore,
\begin{equation} \label{eq:lcup}
G \sim \bigcup_{i=1}^h iB \sim \bigcup_{i=h-\ell+1}^h iB.
\end{equation}
For any $x \in G$, since $B$ is infinite, $(x - B) \cap \bigcup_{i=h-\ell+1}^h iB$ is nonempty and therefore
$$G = \bigcup_{i=h - \ell +2}^{h+1} iB.$$

We now claim that there are $u, v$ such that $h+2 \le u < u+v \le h+\ell +1$,  $uB \cap (u+v)B \neq \emptyset$ and $\gcd(v, \ell)=1$. 
Suppose for a contradiction that this is not true. 
Then we have disjoint unions
$$G = \bigcup_{i \in I_1 +\ell} iB \sqcup  \bigcup_{i \in I_2 + \ell} iB$$
and
$$G = \bigcup_{i \in I_1} iB \sqcup  \bigcup_{i \in I_2} iB,$$
where 
$$I_1:= \{ j \in [h-\ell +2, h+1] : p \mid j\}$$
and 
$$I_2:= \{ j \in [h-\ell +2, h+1] : p \nmid j \},$$
where $p$ is the unique prime divisor of $\ell$. 
It follows that $\bigcup_{i \in I_1} iB = \bigcup_{i \in I_1 +\ell} iB$. 
By repeatedly adding $\ell B$ to both sides, we have $\bigcup_{i \in I_1} iB = \bigcup_{i \in I_1 + s \ell} iB$ for any $s \ge 1$. 
For $s$ sufficiently large, this implies that $\bigcup_{i \in I_1} iB = G$ (since we already know that $B$ is a basis). 
This is a contradiction and the claim is proved.

We now proceed similarly to the proof of Lemma \ref{lem:weakbasis}. If $c \in uB \cap (u+v)B$, then
$$(\ell-1) c \in (\ell-1)u B \cap ((\ell -1) u+v) B \cap \cdots \cap (\ell-1)(u+v)B.$$

Let $y_i=(\ell-1)u+iv$.
For each $i\in [0,\ell -1]$, there exists $x_i\in [(\ell-1) (u+v-1), (\ell-1)(u+v)] $ satisfying $x_i\equiv y_i \mod \ell$ and $x_i>y_i$.
Further, since $\gcd(v,\ell)=1$, we have $\{x_0,\ldots,x_{\ell -1}\}=[(\ell-1) (u+v-1), (\ell-1)(u+v)]$.
Therefore,
\begin{equation} \label{eq:lcap}
(\ell-1) c \in \bigcap_{i=(\ell-1) (u+v-1)}^{(\ell-1) (u+v)} iB. 
\end{equation}
For all but finitely many $x \in G$, from \eqref{eq:lcup} and \eqref{eq:lcap}, we have
$$x = (x- (\ell-1) c ) + (\ell-1) c \in \left( (\ell-1) (u+v) + h - \ell +1 \right) B.$$
Therefore, $B$ is a basis of order at most $ (\ell-1) (u+v) + h - \ell +1 \le (\ell-1) (h + \ell +1) + h - \ell +1 = h\ell + \ell^2 - \ell$.
\end{proof}

\begin{remark} 
What we need about $\ell$ in the proof is that whenever $\gcd(a, \ell)=1$ and $\gcd(b, \ell) \neq 1$, then $\gcd(a-b, \ell) =1$. 
Obviously, prime powers are the only integers having this property.
\end{remark} 

\subsection{Lower bounds}
\label{sec:XlowBounds}
We will again modify constructions from quotients, but in a different way than in Section \ref{sec:lowbounds}.
Let $X_G^*(h,k)$ be the maximal order of a nice basis which is included and has cocardinality $k$ in a nice basis of order at most $h$. Nice bases are defined at the beginning of Section \ref{sec:lowbounds}.
\begin{proposition}
\label{prop:Xquotients}
Let $G$ be an infinite abelian group and $H$ an  infinite subgroup of $G$. Then
$X_G(h,k)\ge X^*_{G/H}(h,k)$.
\end{proposition}
\begin{proof}
Let $A$ be a nice basis of order (at most) $h$ of $G/H$ which contains a subset $F$ of cardinality $k$ such that $A\setminus F$ is a nice basis of order $X^*_{G/H}(h,k)$ of $G/H$.

Let $\pi$ be the projection $G\rightarrow G/H$ and let $\tilde{F}$ be a set of representatives of
$F$ in $G$.
Then let $B=\tilde{F}\cup \pi^{-1}(A\setminus F)$.
Note that $A\setminus F$ is not empty since it is a basis.
Then $B$ is a basis of order (at most) $h$ of $G$
and for any $h'\ge 1$, we have $h'\pi^{-1}(A\setminus F)\sim G$
if and only if $h'(A\setminus F)= G/H$, which by hypothesis is equivalent to $h'\ge X^*_{G/H}(h,k)$.
So $B\setminus \tilde{F}$ is a basis of order $X^*_{G/H}(h,k)$ of $G$, which concludes.
\end{proof}
Now we prove Theorem \ref{th:XlowBounds}.
\begin{proof}[Proof of Theorem \ref{th:XlowBounds}]
Fix some integer $k$. By hypothesis, there is an infinite set $I$ such that for any $N \in I$, $G$ has a quotient isomorphic to $\Z / N \Z$.  
Let $N \in I$ and $b$ satisfy $(b-1)^{k+1}<N\le b^{k+1}$.
If $N$ is sufficiently large, then so is $b$, and $b^k<(b-1)^{k+1}<N$.
Consider the nice basis
$A=\{0,1,b,\ldots,b^k\}$ of order at most $h:=(b-1)(k+1)$ of $\Z/N\Z$.
Then $F=\{b,\ldots,b^k\}$ has the property that $A\setminus F = \{0,1\}$ is a nice basis of order 
$N-1\ge (b-1)^{k+1}\gg_k h^{k+1}$. Proposition \ref{prop:Xquotients} then implies that $X_G(h,k) \geq X_{\Z / N \Z}(h,k) \gg_k h^{k+1}$, proving the first part of the theorem.

In the regime where $h$ is fixed and $k$ tends to infinity, we use the following fact: there exists a constant $c_h>0$ such that for any large $x$,
there exists a set $A\subset [0,x)$ of integers such that $[0,x)\subset hA$ and $\abs{A}\le c_hx^{1/h}$. Such a set is referred to as a thin basis in the literature and was first constructed by Cassels \cite{cassels}. In particular $0$ and $1$ lie in $A$.
Now consider $A$ as a nice basis of $\Z/x\Z$ of order $h$.
Consider $F=A\setminus \{0,1\}$ of cardinality $k\le c_hx^{1/h}$, so $A\setminus F$ is a nice basis of order
$x\gg_h k^{h}$. One last appeal to Proposition \ref{prop:Xquotients} concludes then the proof of Theorem \ref{th:XlowBounds}.
\end{proof}

\section{The function $S_T(h,k)$}
\label{sec:s} Again, in this section we fix a translatable semigroup $T$ and an invariant mean $\Lambda$ on $T$. 
Recall that $\Lambda$ extends to an invariant mean on $G=G_T$ by setting $\Lambda(f) = \Lambda (f |_T)$ for all $f \in \ell^\infty (G)$, where $f |_T$ is the restriction of $f$ to $T$. 
For $A \subset G$, we write $d(A)=\Lambda(1_A)$. 

We first prove the following observation already used in \cite[Section 6]{llp}.
\begin{lemma}\label{lem:lowdensity} 
Suppose $A \subset G$, $a \in A$ satisfy $T\subeq hA $ and $d(T \setminus h (A \setminus \{a\} ) ) < \frac{1}{h}$. Then 
$T\subeq 2h (A \setminus \{a\} ) $. 
\end{lemma}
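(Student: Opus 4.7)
The plan is to set $B = A \setminus \{a\}$ and $Z = hB$, viewed as a subset of $G$, and then show that in fact $Z + Z = G$ via a simple density-intersection argument, which is stronger than the stated conclusion.

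First I would translate the density hypothesis into a lower bound for $d(Z)$. By Lemma \ref{lem:extension}, the invariant mean $\Lambda$ on $T$ extends to $G$ by restriction, so $d(G \setminus T) = \Lambda(1_{G \setminus T}\!\mid_T) = 0$ and consequently $d(G) = 1$. Writing $X = T \setminus Z$, one has the inclusion $G \setminus Z \subset X \cup (G \setminus T)$, so
\[
d(G \setminus Z) \le d(X) + d(G \setminus T) = d(X) < \tfrac{1}{h},
\]
hence $d(Z) > 1 - \tfrac{1}{h}$.

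Next, for an arbitrary $t \in G$, translation invariance gives $d(t-Z) = d(Z) > 1 - \tfrac{1}{h}$, so
\[
d(Z) + d(t-Z) > 2 - \tfrac{2}{h} \ge 1 \qquad (h \ge 2).
\]
Finite additivity applied inside $G$ forces $Z \cap (t-Z) \neq \emptyset$: otherwise $d(Z \cup (t-Z)) = d(Z) + d(t-Z) > 1 = d(G)$, which is impossible. Picking any $z$ in this intersection yields $t = z + (t-z) \in Z + Z = 2hB$. Since $t \in G$ was arbitrary, we in fact get $G = 2hB$, which is much stronger than $T \subeq 2hB$.

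There is essentially no obstacle in the argument; the only point worth double-checking is the behaviour of the extended mean on the complement $G \setminus T$, which is immediate from the construction in Lemma \ref{lem:extension}. The case $h = 1$ is not covered by the inequality $2 - 2/h \ge 1$, but then the hypothesis $T \subeq A$ already makes $B = A \setminus \{a\}$ cofinite in $T$, and translatability directly yields $T \subeq B + B$.
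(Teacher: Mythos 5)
There is a genuine gap here, and in fact your final conclusion is false. The fatal step is the assertion that translation invariance gives $d(t-Z)=d(Z)$. The set $t-Z=t+(-Z)$ is a translate of $-Z$, not of $Z$, so translation invariance only yields $d(t-Z)=d(-Z)$; to pass from $d(-Z)$ to $d(Z)$ you need the inversion-invariance property (D4) of Section \ref{sec:invariant}, which is not satisfied by every invariant mean and which demonstrably fails for the extension constructed in Lemma \ref{lem:extension}. Indeed, for $T=\N$ that extension is $d(X)=\Lambda(1_{X\cap\N})$, so $d(-\N)=0$ while $d(\N)=1$. Concretely, take $T=\N$, $G=\Z$, $A=\N$, $a=0$, $h=2$: then $Z=2B=\{2,3,\dots\}$ satisfies $d(T\setminus Z)=0<1/2$, yet for $t<0$ one has $(t-Z)\cap\N=\emptyset$, hence $d(t-Z)=0$ and the intersection argument collapses; moreover $2hB=4B\subset\N\subsetneq\Z$, so the claimed conclusion $G=2hB$ is simply wrong. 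Your argument is essentially Claim 3 in the proof of Theorem \ref{th:s1}, which is valid when $T$ is a group and the mean is chosen to satisfy (D4); the entire point of Lemma \ref{lem:lowdensity} is to cover translatable semigroups where that shortcut is unavailable.

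The paper's proof stays ``one-sided'' so as to use only translation invariance, never inversion. Fixing $a_0\in B$ and setting $X=T\setminus hB$, the bound $\sum_{i=0}^{h-1}d\bigl(X+(h-i)a+ia_0\bigr)<1$ produces infinitely many $x\in T$ with $x+i(a-a_0)\in h(B-a_0)$ simultaneously for all $0\le i\le h-1$. For a generic $t\in T$ one then writes $t-x$ as a sum of $h$ elements of $A-a_0$, lets $i\le h-1$ be the number of occurrences of $a-a_0$ in that representation, and replaces those occurrences by the prepared element $x+i(a-a_0)$, landing in $2h(B-a_0)$ and concluding by translatability. If you want to keep a density argument you must organize it in this additive, translate-only fashion.
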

\begin{proof}
Let $a_0$ be an element in $A \setminus \{a\}$. Let $B = T \setminus h (A \setminus \{a\})$, then $d(B) <1/h$. Since $d$ is translation-invariant, we have 
\[
\sum_{i=0}^{h-1} d(B+(h-i)a + i a_0) < 1
\] 
and consequently there are infinitely many $x \in T$ such that $x + h(a+a_0) \not \in B+(h-i)a + i a_0 $ for all $i=0, 1, \ldots, h-1$. In other words, $x + ia + (h-i)a_0 \in h(A \setminus \{a\})$ and $x + i(a-a_0) \in h(A \setminus \{a\} -a_0)$ for all $i=0, 1, \ldots, h-1$. 

Now for all but finitely many $t \in T$, we have $t-x \in h(A-a_0)$ and $t-x \neq h(a-a_0)$. 
If $i$ is the number of occurrences of $a-a_0$ in some representation of $t-x$ as a sum of $h$ elements of $A-a_0$, then $0 \le i \le h-1$ and $t-x - i(a-a_0) \in (h-i) ( A \setminus \{a\} - a_0)$. 
Thus 
\[
t = (t-x-i(a-a_0))+(x+i(a-a_0))  \in (2h -i) (A \setminus \{a\} -a_0) \subset 2h (A \setminus \{a\} -a_0),
\] 
and the lemma is proved.
\end{proof}

\begin{proof}[Proof of Theorem \ref{th:s1}] 
We first strengthen slightly an observation already used in \cite[Section 6]{llp}.

\medskip
\noindent \textbf{Claim 1.} For any finite subset $I \subset A$, for all but finitely many $x \in T$, there are at most $h-1$ elements $a \in I$ such that $x \in T \setminus h (A \setminus \{a\})$. 

\begin{proof}[Proof of Claim 1.]
Since $T \setminus hA$ is finite, we may assume $x \in hA$. 
Fix a representation
$$x = a_1 +\cdots + a_h,$$
where $a_i \in A$ for $i=1, \ldots, h$. 
If $x \in T \setminus h (A \setminus \{a\})$, then $a$ must be one of $a_1, \ldots, a_h$. 
This already implies that there are at most $h$ elements $a \in I$ such that $x \in T \setminus h (A \setminus \{a\})$. 
Furthermore, if $x \in T \setminus h (A \setminus \{a\})$ for $h$ elements $a \in I$, then necessarily $x \in hI$. Since $hI$ is finite, this proves the claim.
\end{proof}

Let $I$ be an arbitrary finite subset of $A$.
Let $f(x)=\sum_{a\in I} 1_{T\setminus h(A\setminus \{a\})}(x)$.
Then for all but finitely many $x$, we have $f(x)\le h-1$.
By evaluating $\Lambda(f)$ and the fact that finite sets have density 0, we have the following

\medskip
\noindent \textbf{Claim 2.} For any finite set $I \subset A$, we have $\sum_{a \in I} d\big( T \setminus h (A \setminus \{a\}) \big) \le h-1$. 

\medskip
Suppose now $T$ is a group. We may assume that $\Lambda$ satisfies property (D4) in Section \ref{sec:invariant}. We have

\medskip
\noindent \textbf{Claim 3.} If $B \subset T$ and $d (B) > 1/2$, then $2B = T$.

\medskip
This immediately follows from Lemma \ref{lem:prehistoric}.

Let $J$ be the set of all $a \in A$ such that $\ord^*_T(A \setminus \{a\}) > 2h$. 
For all $a \in J$, we have $d( h (A \setminus \{a\}) ) \le 1/2$ (if not, we will have $2h (A \setminus \{a\})=T$) and therefore $d \big( T \setminus h (A \setminus \{a\}) \big) \ge 1/2$. Since $\sum_{a \in I} d \big( T \setminus h (A \setminus \{a\}) \big) \le h-1$ for any finite subset $I$ of $J$, this shows that $J$ is finite and $|J| \le 2(h-1)$, and the second part of Theorem \ref{th:s1} is proved.

\medskip
For general translatable semigroups, we use Lemma \ref{lem:lowdensity} instead of Claim 3. 
For all $a \in J$, we have $d( h (A \setminus \{a\}) ) \le 1/h$ and therefore $|J| \le h(h-1)$.
\end{proof}

We now generalize these ideas to prove Theorem \ref{th:s2}.

\begin{proof}[Proof of Theorem \ref{th:s2}] 
Let $R$ be the set of all regular pairs $\{ a, b\} \subset A$ such that $\ord^*_T(A \setminus \{a,b\}) > 2 X_T(h)$. 
Also, let $U$ be the set of all regular elements $a \in A$ such that $\ord^*_T(A \setminus \{a \}) > S_T(h)$. 
By Theorem \ref{th:s1} we know that $|U| = O(h^2)$. 

\medskip
\noindent \textbf{Claim 1.} For all but finitely many $x \in T$, there are at most $h X_T(h) (X_T(h)-1)$ pairs $F \in R$ such that $x \in T\setminus h(A\setminus F)$. 
If $T$ is a group then the number of such pairs is at most $2h (X_T(h)-1)$.

\begin{proof}[Proof of Claim 1.]
Since $T \setminus hA$ is finite, we may assume $x \in hA$. Fix a representation
$$x = a_1 +\cdots + a_h,$$
where $a_i \in A$ for $i=1, \ldots, h$. 
If $x \in T\setminus h(A\setminus F)$, then $a_i \in F$ for some $i$. Let $F=\{a_i, b\}$, then $b$ is a regular element of the basis $A \setminus \{a_i\}$ (note that $a_i$ has to be regular in the first place). 
By the definition of $X_T(h)$, we have
$$\ord^*_T(A \setminus \{a_i, b \}) > 2 X_T(h) \ge 2 \ord^*_T(A \setminus \{ a_i \}).$$
By Theorem \ref{th:s1}, there are at most $X_T(h) (X_T(h)-1)$ choices for $b$, and this number can be replaced by $2 (X_T(h)-1)$ if $T$ is a group.
Thus Claim 1 is proved.
\end{proof}

Let $I$ be a finite subset of $R$.
Let $f(x)=\sum_{F\in I}1_{T\setminus h(A\setminus F)}(x).$
Again evaluating $\Lambda(f)$ yields the following bound.

\medskip
\noindent \textbf{Claim 2.} For any finite subset $I \subset R$, we have 
\[\sum_{F \in I} d \big(  T \setminus h(A\setminus F) \big) \le 
\,\begin{cases} 
h X_T(h) (X_T(h)-1)   &\quad  \text{for any $T$},   \\  2h(X_T(h)-1) &\quad  \text{when $T$ is a group}.     
\end{cases}
\]

We are now able to conclude the proof when $T$ is a group.
For all $F \in R$, we have $d \big(  T \setminus h(A\setminus F) \big) \ge 1/2$. 
If not, we will have  $\ord^*_T(h(A\setminus F)) \le 2$ and $\ord^*_T(A\setminus F) \le 2h \le 2 X_T(h)$, which contradicts the definition of $R$. 
This implies that $R$ is finite and furthermore, $|R| \le 4h(X_G(h)-1)$. 

If $T$ is an arbitrary translatable semigroup, then we apply Lemma \ref{lem:lowdensity} to the basis $A \setminus \{a\}$ and get

\medskip
\noindent \textbf{Claim 3.} If $F = \{a,b\}$ is regular, $\ord_T^*(A \setminus \{a\}) = k$, $d(T \setminus k(A \setminus F)) < \frac{1}{k}$, then $\ord^*_T (A \setminus F) \le 2k$. Consequently, if $F \in R$ and $a \not \in U$, then $d( T \setminus h(A \setminus F) ) \ge d(T \setminus 2h(A \setminus F)) \ge \frac{1}{2h}$.

From Claims 2 and 3, the number of pairs $F \in R$, at least one of whose elements is not in $U$, is at most $h X_T(h) (X_T(h)-1) \cdot 2h = O(h^2 X_T(h)^2)$.
Clearly the number of pairs $F \in R$, both of whose elements are in $U$, is $O(h^4)$. 
This concludes the proof of Theorem \ref{th:s2}.
\end{proof}
We point out that the argument used in the proof of Theorem \ref{th:s2} may be applied to bound $S_T(h,k)$ for $k\ge 3$, but it seems to yield bounds which are worse than trivial.

Theorem \ref{th:s1} prompts the following question.

\medskip
\textbf{Question}.
If $T\subeq hA$, then we know that there are at most $h-1$ elements $a \in A$ such that $\ord_T^*(A \setminus \{a\}) = \infty$, and these are characterized by the Erd\H{os}-Graham criterion, i.e. $\langle A \setminus \{a\} - A \setminus \{a\} \rangle \neq G$. 
Can one find a nice  algebraic characterization for elements $a$ for which $\ord_T^*(A \setminus \{a\}) >2h$?

\appendix
\section*{Appendix A. The structure of translatable semigroups} \label{sec:a}
In this appendix we prove the structure result for translatable semigroups.  
\begin{proposition}
\label{prop:structure}
Let $T$ be a translatable semigroup. Then either T is a group (i.e. $T$ equals its Grothendieck group $G_T$), or there exists $x\in T$ and $T\sim C\oplus x\N$, where $C$ is a finite subgroup of $G_T$.
\end{proposition}
\begin{proof}
Suppose that $T$ is not a group.
Let $G=G_T$ be its Grothendieck group. Since $T \neq G$, we have $T \not \subset -T$.
Let $x\in T \setminus (-T)$.  
Then the order of $x$ in $T$ is necessarily infinite, since if $kx=mx$ for some $k>m$ then $-x = (k-m-1)x\in T$, a contradiction.
Therefore $x$ generates an infinite subgroup $x\Z$  of $G$ and also a subsemigroup $x\N^*$ (isomorphic to $\N^*$) of $T$.

Let $R=T\setminus (x+T)$, a finite set.
Let $u\in T$ be arbitrary.
If $u-kx\in T$ for infinitely many positive integers $k$, then since $T \sim u+T$, we have $u-kx \in u+T$ and $-kx\in T$ for some positive integer $k$.
Therefore $-x=-kx+(k-1)x\in T$, which contradicts our hypothesis on $x$.
So we let $u'=u-kx$ where $k$ is the maximum nonnegative integer such that $u-kx\in T$; then $u'\notin x+T$.
As a result, every element of $T$ may be uniquely decomposed as a sum of an element of $R$ and an element of $x\N$, so $T=R+x\N$ and $G=T-T=R-R+x\Z$. Consequently, $x\Z$ has finite index in $G$.

By the classification theorem of finitely generated abelian groups, there exists a finite subgroup $C$ of $G$ such that $G=C\oplus x\Z$.
By Lemma \ref{lem:translatable} part (4), $T \cap (c + x \Z) \neq \emptyset$ for any $c \in C$. On the other hand, we have $T \cap (c + x\Z^{-}) = \emptyset$. 
If not, then since $c$ has finite order, we have $-\ell x \in T$ for some $\ell \in \Z^+$, so $-x = (-\ell x) + (\ell-1)x \in T$, a contradiction. 
Thus for every $c\in C$, there exists a minimal $k\in \N$ such that $c+kx\in T$.
We conclude that $T \sim C\oplus x\N$.
\end{proof}

As a consequence, this structure result implies that any translatable semigroup $T$ admits a basis of any order $h\ge 2$. 
\begin{proposition}
\label{prop:anyOrder}
For every translatable semigroup $T$ and every integer $h \ge 2$, there exists a basis of $T$ of order $h$.
\end{proposition}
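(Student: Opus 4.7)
My plan is to invoke Proposition \ref{structure} to split into two cases. If $T$ coincides with its Grothendieck group, then $T$ is an infinite abelian group and \cite[Theorem 1]{llp} provides a basis of every order $h \ge 1$, handling this case.

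Otherwise, Proposition \ref{structure} yields $T \sim C \oplus x\N$ for some finite subgroup $C$ of $G_T$ and some $x \in T$ of infinite order. In this setting I would adapt the classical order-$h$ basis $\{1\} \cup h\N^*$ of $\N$ by setting
$$ A_0 := C + \bigl(\{x\} \cup hx\N^*\bigr) \subset C \oplus x\N. $$
Since $C$ is a finite subgroup containing $0$, one has $mC = C$ for every $m \ge 1$, so $mA_0 = C + m\bigl(\{x\} \cup hx\N^*\bigr)$. A sum of $m$ elements of $\{x\} \cup hx\N^*$ can be written $\ell x + hx\mu$, where $\ell \in \{0, 1, \ldots, m\}$ counts the copies of $x$ used and $\mu \ge m - \ell$ (with the convention $\mu = 0$ when $\ell = m$). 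Two observations follow: (i) for $m = h$ and $\ell$ ranging over $\{0, 1, \ldots, h\}$, every residue class modulo $h$ is covered except for finitely many elements, so $h A_0$ is cofinite in $C \oplus x\N$; (ii) for $m \le h-1$, the only summands contributing to the residue class $mx \pmod{hx}$ require $\ell = m$, which forces $\mu = 0$ and yields exactly $mx$, so $m A_0$ misses the infinite set $C + mx + hx\N^*$. Hence $A_0$ has order exactly $h$ in $C \oplus x\N$.

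Finally, I would transfer $A_0$ to a subset of $T$ by translation: Lemma \ref{lem:translatable}(3) furnishes some $t \in T$ with $t + \bigl((C \oplus x\N) \setminus T\bigr) \subset T$, so that $A := t + A_0 \subset T$. Cofiniteness in $C \oplus x\N$ is preserved under translation inside $G_T$ (writing $ht = c_t + k_tx$, one has $ht + C \oplus x\N = C + x\N_{\ge k_t}$, still cofinite) and transfers to cofiniteness in $T$ via $T \sim C \oplus x\N$; thus $hA = ht + hA_0$ remains cofinite in $T$ while $(h-1)A$ does not. The main subtle point is exactly this transfer step, since simply restricting $A_0$ to $T$ could in principle disrupt the $h$-fold sumset unpredictably; translating by a judiciously chosen $t \in T$ is a clean way to land inside $T$ while preserving the sumset structure.
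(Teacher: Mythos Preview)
Your proof is correct, and it takes a genuinely different route from the paper's in the non-group case. The paper first reduces to the case $0 \in T$, then builds a sequence $(\Lambda_i)_{i\ge 0}$ of subsets of $T$ giving every element a unique finite-support expansion (using the decomposition $T = R + x\N$ from the proof of Proposition~\ref{structure} together with binary expansion along $x\N$), and finally invokes the machinery of \cite[Theorem~1]{llp} to extract from such a sequence a basis of order exactly $h$. You instead write down the explicit candidate $A_0 = C + (\{x\}\cup hx\N^*)$, modelled on the classical order-$h$ basis $\{1\}\cup h\N^*$ of $\N$, verify its order by a direct residue-class count, and then translate it into $T$ via Lemma~\ref{lem:translatable}(3). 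Your argument is more elementary and self-contained, avoiding the $\Lambda$-sequence formalism entirely; the paper's approach, on the other hand, plugs into a reusable framework already set up in \cite{llp}. One minor remark: you appeal to the conclusion $T \sim C \oplus x\N$ of Proposition~\ref{structure}, whereas the paper (which states that Proposition~\ref{structure} ``will not be needed'') only uses the intermediate decomposition $T = R + x\N$ from its proof; there is no circularity either way. Your transfer step at the end could be streamlined by simply citing Lemma~\ref{lem:translatable}(1), which gives $ht + (C\oplus x\N) \sim C\oplus x\N \sim T$ directly, but your explicit computation via $ht = c_t + k_t x$ is also fine.
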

When $T$ is a group, this was proved by Lambert, Plagne and the third author \cite[Theorem 1]{llp}. Our proof makes use of their result.
\begin{proof}
We may assume  that $T$ has a neutral element $0$.
Indeed, supposing that $T$ does not have a neutral element, there exists $x\in T\setminus (-T)$; then $A$ is a basis of 
$T\cup\{0\}$ if and only if $A+x\subset T$ is a basis of $T$, and $\ord^*_T(A+x) = \ord^*_{T\cup\{0\}} (A)$.
We shall construct an infinite sequence $\Lambda=(\Lambda_i)_{i\ge 0}$ of subsets of $T$ such that $\{0\}\subsetneq \Lambda_i$ for every $i\ge 0$ and for any $x\in T$, there exists a unique sequence $\lambda(x)=(\lambda_i(x))_{i\ge 0}$ of finite support such that $x=\sum_{i=0}^{\infty}\lambda_i(x)$ where $\lambda_i(x) \in \Lambda_i$. (The support $\supp(s)$ of a sequence $s=(s_i)_{i\ge 0}\in T^\N$ is the set $\{j\in \N: s_j\neq 0\}$.) As shown in \cite[Proposition 1]{llp} (the arguments there do not use the group structure, only the semigroup structure), such a sequence $\Lambda$ gives rise to a basis of order exactly $h$.

Either $T$ is a group, in which case we can use \cite[Theorem 1]{llp}; or there is a finite subset $\{0\}\subset R\subset T$ and $x\in T$ such that any $t\in T$ may be uniquely written as $t=r+kx$ for some $(r,k)\in R\times \N$.
Let $n=\sum_{i=0}^{\infty}a_i(n)2^i$ be the unique binary decomposition of any integer $n\in\N$, where $a_i(n) \in \{0,1\}$; then we set $\Lambda_i=\{0,2^{i-1}x\}$ for any $i\ge 1$, and $\Lambda_0=R$ if $R\neq\{0\}$, and $\Lambda_i=\{0,2^{i}x\}$ for any $i\ge 0$ otherwise.
The sequence $\Lambda$ has then the desired property.
\end{proof}

\section*{Acknowledgements}
The first author is grateful to the Max-Planck Institute of Mathematics in Bonn, where he put the finishing touch to this work, for its hospitality and financial support. Part of this research was carried out at the Institut de Math\'ematiques de Jussieu - Paris Rive Gauche, and the first and third authors would like to thank the Institut and the team Combinatoire et Optimisation for their hospitality.

\section*{Funding}
The first author was supported by the LABEX MILYON (ANR-10-LABX-0070) of Universit\'e de Lyon, within the program "Investissements d'Avenir" (ANR-11-IDEX-0007) operated by the French National Research Agency (ANR). The third author was supported by National Science Foundation Grant DMS-1702296 and a summer research grant of the College of Liberal Arts of the University of Mississippi.

\end{document}